\numberwithin{equation}{section}
\newtheorem*{rep@theorem}{\rep@title}
\newcommand{\newreptheorem}[2]{%
\newenvironment{rep#1}[1]{%
 \def\rep@title{#2 \ref*{##1}}%
 \begin{rep@theorem}}%
 {\end{rep@theorem}}}
\theoremstyle{plain}
\newtheorem{theorem}{Theorem}[section]
\newtheorem{proposition}[theorem]{Proposition}
\newtheorem{corollary}[theorem]{Corollary}
\newtheorem{conj}{Conjecture}
\newtheorem{thm}{Theorem}
\newtheorem*{theorem*}{Theorem}
\newtheorem*{proposition*}{Proposition}
\newtheorem*{corollary*}{Corollary}
\newtheorem*{lemma*}{Lemma}
\newtheorem*{conjecture*}{Conjecture}
\theoremstyle{definition}
\newtheorem{definition}[theorem]{Definition}
\newtheorem*{definition*}{Definition}
\newtheorem*{example*}{Example}
\newtheorem*{question*}{Question}
\newtheorem*{philosophy*}{Philosophy}
\theoremstyle{remark}
\newtheorem{remark}[theorem]{Remark}
\newtheorem*{remark*}{Remark}
\def\A{\cR}
\def\gal{\mathrm{Gal}}
\def\sp{\mathrm{Spec}\,}
\def\au{\underline{Aut}^\otimes}
\def\per{per_{\A}}
\def\Pd{\cP^{\mathfrak{dr}}}
\def\Ai{\widehat{\A}}
\def\zd{\zeta^{\mathfrak{dr}}}
\def\p{\mathbf{p}}
\def\fil{\mathrm{Fil}}
\def\L{\mathbb{L}}
\def\Zd{\zd}
\def\dr{\mathfrak{dr}}
\def\Pdh{\widehat{\cP}^{\dr}}
\def\per{\mathrm{per}}
\def\du{*}
\def\uni{sequential}
\def\Uni{Sequential}
\def\Lm{\L^{\fm}}
\def\Ld{\L^{\dr}}
\def\og{\mathbf{Og}(\Q)}
\def\spa{\mathbf{Span}(\Q)}
\def\zm{\zeta^{\fm}}
\newcommand\MM{\cA\cM_\Q}
\newcommand\hper{\per_{\Ai}}
\newcommand\iso{\xrightarrow{\sim}}
\newcommand\MT{\mathcal{MT}}
\begin{document}


\title{Sequential periods of the crystalline Frobenius}
\author{Julian Rosen}
\date{\today}
\maketitle

\begin{abstract}
There is a notion of $p$-adic period coming from the crystalline Frobenius automorphism of the de Rham cohomology of an algebraic variety. In this paper, we consider sequences of $p$-adic periods, one for each prime. We study the sequences using motivic periods, and we formulate an analogue of the Grothendieck period conjecture.
\end{abstract}

%
%
\section{Introduction}

Let $X$ be a smooth algebraic variety over $\Q$. The algebraic de Rham cohomology $H_{dR}(X)=\bigoplus_n H^n_{dR}(X)$ 
is a finite-dimensional vector space over $\Q$. For every sufficiently large prime $p$, there is a distinguished $\Q_p$-linear automorphism
\begin{equation}
\label{deff}
F_{p,X}:H_{dR}(X)\otimes \Q_p\xrightarrow{\sim} H_{dR}(X)\otimes \Q_p,
\end{equation}
the \emph{crystalline Frobenius map}, coming from the absolute Frobenius endomorphism of the reduction of an integral model of $X$ modulo $p$ (see \cite{Ked09}). If we choose a $\Q$-basis for $H_{dR}(X)$, we can represent $F_{p,X}$ as a square matrix with entries in $\Q_p$, and $\Q$-linear combinations of matrix entries are called \emph{$p$-adic periods}\footnote{This is one of several different meanings of the term ``$p$-adic period'' that have appeared in the literature.} of $X$. Note that the $\Q$-span of matrix entries in independent of the choice of basis.

It makes sense to talk about ``the same'' $p$-adic period for different $p$. We consider sequences of $p$-adic periods, one for each sufficiently large $p$, living in the ring
\[
\A:=\frac{\prod_p\Q_p}{\bigoplus_p\Q_p}.
\]
An element of $\A$ is a prime-indexed sequence $(a_p)_p$, with $a_p\in\Q_p$, and two sequences are equal in $\A$ if they agree for all sufficiently large $p$. The maps $F_{p,X}$ (for all large $p$ at once) assemble to give an $\A$-linear automorphism
\begin{equation}
\label{eqFA}
F_{\A,X}:H_{dR}(X)\otimes\A\xrightarrow{\sim} H_{dR}(X)\otimes\A,
\end{equation}
which we call the \emph{\uni{} Frobenius map}. Like before, if we choose a $\Q$-basis for $H_{dR}(X)$, we can represent $F_{\A,X}$ as a square matrix with entries in $\A$.
The following definition is new.

\begin{definition}
A \emph{\uni{} period} (or \emph{$\A$-valued period}) of $X$ is a $\Q$-linear combination of matrix entries for $F_{\A,X}$. We denote the set of \uni{} periods of $X$ by $\cP_\A(X)\subset\A$.
\end{definition}


We compute several examples of \uni{} periods in Sec.\ \ref{secex}.

\begin{remark}
 \Uni{} periods are analogous to the usual periods of $X$, which are complex numbers arising from the comparison between de Rham and Betti cohomology. The analogy is explained in \ref{secmot}.
\end{remark}

\subsection{Results}
In this paper, we develop a motivic theory of \uni{} periods, and we formulate an analogue of the Grothendieck period conjecture for \uni{} periods. Our main results are several consequences of the conjecture. The conjecture has a weak form (Conjecture \ref{conup}) and a strong form (Conjecture \ref{conups}). Conjecture \ref{conup} predicts an answer to the question: When are two \uni{} periods equal? Conjecture \ref{conups} predicts an answer to the question: When are two \uni{} periods congruent modulo $p^n$ for all sufficiently large $p$?

Our conjectures are assertions about a category of (mixed) motives. Suppose $\cM$ is a Tannakian category of motives over $\Q$ (the precise properties we need are given in Definition \ref{defmot}), equipped with a fibre functor $\omega_{dR}$, the de Rham realization. Write $G_{dR}$ for the affine group-scheme $\au(\omega_{dR})$. The coordinate ring of $G_{dR}$ is a commutative $\Q$-algebra $\Pd$, called the \emph{ring of de Rham  periods} of $\cM$ (see \cite{Bro14}, \S2).
If $\cM$ satisfies the conditions of Definition \ref{defmot}, the crystalline Frobenius (for all sufficiently large $p$ at once) gives a distinguished functorial automorphism $F_{\A,M}$ of $\omega_{dR}(M)\otimes\A$ for $M\in\cM$. These automorphisms determine
a ring homomorphism
\[
\per_\A:\Pd\to\A,
\]
which we call the \emph{\uni{} period map}. The image of $\per_\A$ is the $\Q$-span of all matrix entries for $F_{\A,M}$ (for all $M\in\cM$).

The weak form of our conjecture is the following assertion about $\cM$.
\begin{conj}[\Uni{} period conjecture]
\label{conup}
The map $\per_\A$ is injective.
\end{conj}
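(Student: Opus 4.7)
The plan is to recast Conjecture~\ref{conup} as a density question about Frobenius elements in the motivic Galois group $G_{dR}$, and to attack it via a Tannakian, Chebotarev-style argument. First, since $\Pd$ is the coordinate ring of $G_{dR}$, the map $\per_\A$ corresponds dually to an $\A$-point of $G_{dR}$, namely the sequential Frobenius $F_\A$; injectivity of $\per_\A$ is then equivalent to the assertion that for every nonzero regular function $f \in \Pd$ there are infinitely many primes $p$ with $f(F_p) \neq 0$ in $\Q_p$. So the conjecture becomes a density statement: the family $\{F_p\}_p$ separates nonzero regular functions on $G_{dR}$.

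To prove this density I would proceed representation by representation. Any nonzero $f \in \Pd$ is a matrix coefficient of some finite-dimensional $M \in \cM$, so $f$ factors through the finite-type quotient $G_{dR} \to \mathrm{GL}(\omega_{dR}(M))$, landing in the Zariski closure $G_M$ of the motivic image. The task then reduces to: for each $M$ and each nonzero regular function $g$ on $G_M$, show $g(F_{p,M}) \neq 0$ for infinitely many $p$. For pure motives this should follow by combining compatibility of crystalline and $\ell$-adic Frobenii (via Fontaine's $\mathbf{D}_{\mathrm{cris}}$) with equidistribution of $\ell$-adic Frobenius conjugacy classes in the image of the motivic Galois representation; for Artin motives this is literally Chebotarev density, and for pure motives of abelian type one would invoke Sato--Tate-style equidistribution where available.

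The main obstacle is the mixed case. When $\cM$ contains genuinely mixed motives, $G_{dR}$ has a large prounipotent radical $U$, and the image of the motivic Galois representation into $G_M$ is not governed by a classical Chebotarev-type statement but instead depends on fine arithmetic information in the unipotent part. A natural inductive strategy is to handle the pure part first and then use the Tannakian exact sequence relating the pure and mixed motivic Galois groups to reduce the problem to a density statement for $F_\A$ inside $U$. In the mixed Tate setting, this ultimately comes down to a generic-positionality assertion about $p$-adic multiple zeta values, a problem at least as hard as understanding all algebraic relations among classical MZVs, which is why I expect Conjecture~\ref{conup} to remain out of reach outside of explicit low-dimensional examples even though it admits such a clean Tannakian reformulation.
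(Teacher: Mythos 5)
The statement you were asked to prove is a \emph{conjecture}, not a theorem: the paper offers no proof of Conjecture~\ref{conup} and explicitly says it is expected to be difficult to resolve. There is therefore no argument in the paper to compare your proposal against, and your proposal itself correctly stops short of claiming a proof. What you do supply in the first paragraph is a correct reformulation that does appear in the paper, as the remark following Corollary~\ref{thog}: injectivity of $\per_\A$ is equivalent to Zariski density of the crystalline Frobenius elements in $G_{dR}$, or more precisely (your sharper and more accurate version) to the assertion that every nonzero $f\in\Pd$ satisfies $f(F_p)\neq 0$ for infinitely many $p$. This is a genuine observation and matches the paper's framing, and it also correctly contrasts with the a priori weaker statement (Frobenii generate a Zariski-dense subgroup) that governs fullness of the Ogus realization.

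Everything after that first paragraph, however, is a survey of why one might hope for such density and why it fails to be accessible, not a proof strategy that could be executed. The proposed reductions do not go through as stated: matrix coefficients of $M$ live on $\mathrm{GL}(\omega_{dR}(M))$, not only on the image $G_M$, so a nonzero $f$ need not restrict to a nonzero function on $G_M$ without first using that $G_{dR}\to G_M$ is faithfully flat on its image; equidistribution of $\ell$-adic Frobenius conjugacy classes (Chebotarev, Sato--Tate) controls characteristic polynomials, i.e.\ conjugation-invariant functions, and does not by itself separate arbitrary regular functions on $G_M$; and, as you note, the prounipotent radical carries essentially all of the content in the mixed case, where no analogue of Chebotarev is available. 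Since you identify the central obstruction and explicitly concede that the conjecture ``remain[s] out of reach,'' this reads as a well-informed discussion of the problem rather than a proof, and it should be labeled as such.
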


\noindent By contrast, a similar $p$-adic period map $\Pd\to\Q_p$ is typically not injective (see Sec.\ \ref{ssfp}).
\smallskip

For $X$ smooth and projective, it is known that the restriction of $F_{p,X}$ to the $n$-th level of the Hodge filtration on $H_{dR}(X)$ has matrix entries divisible by $p^n$ when $p$ is sufficiently large. The strong form of our conjecture is a converse to this divisibility property. It asserts that if a \uni{} period is divisible by $p^n$ for all sufficiently large $p$, then that period is motivically equivalent (in a precise sense) to something coming from the $n$-th level of the Hodge filtration.

To formulate this conjecture precisely, we use the decreasing filtration $\fil^\bullet$ on $\A$ given by
\begin{equation}
\label{eqdeffil}
\fil^n\A:=\big\{(a_p)_p:\liminf_p v_p(a_p)\geq n\big\}.
\end{equation}
There is also a decreasing filtration $\fil^\bullet$ on $\Pd$, coming from the Hodge filtration on algebraic de Rham cohomology.
The strong form of our conjecture is the following assertion about $\cM$.
\begin{conj}[Strong \uni{} period conjecture]
\label{conups}
The \uni{} period map $\per_\A$ is an embedding of filtered algebras, i.e.\ for every integer $n$,
\[
\per_\A^{-1}\big(\fil^n\A)=\fil^n\Pd.
\]
\end{conj}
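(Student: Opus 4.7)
The plan is to establish the two inclusions of Conjecture~\ref{conups} separately, with rather different methods.

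The inclusion $\fil^n\Pd\subseteq\per_\A^{-1}(\fil^n\A)$, i.e.\ the assertion that $\per_\A$ is filtered, should hold unconditionally, independent of either conjecture. Using the Tannakian definition of $\fil^n\Pd$ in terms of the Hodge filtrations on $\omega_{dR}(M)$ for $M\in\cM$, it reduces to the statement that for each $M$ and each sufficiently large $p$, the map $F_{p,M}$ carries $\fil^n\omega_{dR}(M)$ into $p^n L$ for some $\Z_p$-lattice $L\subset\omega_{dR}(M)\otimes\Q_p$. For smooth projective $X$ this is precisely the classical divisibility recalled in the introduction; extending it to arbitrary $M\in\cM$ is a matter of Tannakian functoriality, since tensor products, duals, and subquotients respect both the Hodge filtration and integral lattices. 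Passing to $\A$ then absorbs the $p$-dependence of the lattice into the quotient.

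For the reverse inclusion $\per_\A^{-1}(\fil^n\A)\subseteq\fil^n\Pd$, I would try to reduce to Conjecture~\ref{conup}. The central tool is the Tate period: let $\ell^\dr\in\fil^1\Pd$ be the de Rham period of $\Q(-1)$, so that $\per_\A(\ell^\dr)=(p)_p$. Given $\xi\in\Pd$ with $\per_\A(\xi)\in\fil^n\A$, the strategy is to write
\[
\xi\equiv (\ell^\dr)^n\cdot\eta \pmod{\fil^n\Pd}
\]
for some $\eta\in\Pd$ whose \uni{} period realises the quotient $\per_\A(\xi)/(p^n)_p$, which is well defined in $\A$ because $(p)_p$ is a non-zero-divisor and divides $\per_\A(\xi)$ componentwise for almost all $p$. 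If such a motivic $\eta$ could be produced, the weak conjecture would pin it down uniquely, and rearranging would give $\xi\in\fil^n\Pd$ as required.

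The principal obstacle is precisely the construction of this motivic lift $\eta$. The Hodge filtration on $\Pd$ is not a grading, and for motives of mixed weight the graded pieces $\fil^n\Pd/\fil^{n+1}\Pd$ are not generated by products of $\ell^\dr$ with lower-filtration periods; there is therefore no purely formal decomposition of $\xi$ in the desired shape. Producing $\eta$ amounts to a motivic converse of the integrality statement from step one: every sequence of matrix entries of $F_{\A,M}$ lying in $\fil^n\A$ must in fact arise, motivically, from the $n$-th filtered piece of $\omega_{dR}$ applied to a Tate twist of $M$. I expect this converse to be the genuine hard core of Conjecture~\ref{conups}; it does not follow from Conjecture~\ref{conup} by any formal Tannakian argument, and a proof likely requires $p$-adic Hodge-theoretic input, e.g.\ a structural statement about weakly admissible filtered $\phi$-modules or rigid cohomology, translated into motivic language. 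Absent such input, the strong conjecture appears genuinely stronger than the weak one, rather than a corollary of it.
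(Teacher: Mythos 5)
The statement labelled \ref{conups} is a conjecture, not a theorem, and the paper does not prove it. Section \ref{secpcc} derives from it several notoriously open questions (infinitely many non-Wieferich primes, non-divisibility of Bernoulli numbers, the Kaneko--Zagier conjecture for finite multiple zeta values), so the paper makes clear that the conjecture is not expected to be accessible by current methods; there is consequently no proof in the paper to compare against. Read as a plausibility discussion, your proposal is broadly aligned with the paper's attitude, and you are right that the converse inclusion is the genuine hard part and does not reduce formally to Conjecture \ref{conup}. The paper only asserts the implication Conjecture \ref{conups} $\Rightarrow$ Conjecture \ref{conup} (via separatedness of the filtration on $\Pd$), not the reverse.

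However, your treatment of the forward inclusion $\fil^n\Pd\subseteq\per_\A^{-1}(\fil^n\A)$ contains a genuine error: you claim it holds unconditionally by Tannakian functoriality from the smooth projective case. The paper treats this inclusion as an additional hypothesis on $\cM$, the \emph{divisibility} of the Ogus realization (Definition \ref{defdiv}), which is established for $\MT(\Z)$ only via the nontrivial integrality bound \eqref{eqzpc} on $p$-adic multiple zeta values, and which for pure numerical motives the paper says would follow from the standard conjectures, not from formal nonsense. In fact your lattice argument cannot even be formulated for a general $\cM$ satisfying Definition \ref{defmot}: the axioms furnish an automorphism of $M_{dR}\otimes\Q_p$ but no distinguished $\Z_p$-lattice, so ``carries $\fil^n\omega_{dR}(M)$ into $p^n L$'' is not intrinsic data. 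Even where crystalline lattices do exist, Mazur's divisibility is not automatically inherited by subquotients, duals, and Tate twists: the Hodge filtration on a quotient is the image filtration and can be strictly larger than anything detectable at the integral level, and morphisms of motives need not be strict with respect to lattices. The forward inclusion is genuine arithmetic input, to be verified category by category, not soft Tannakian bookkeeping.
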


We give several consequences of Conjectures \ref{conup} and \ref{conups}. We list some of them here.
The first two results are consequences of {Conjecture \ref{conup}}.

\begin{thm}[Theorem \ref{thcyc}]
Assume the standard conjectures on algebraic cycles, and suppose that Conjecture \ref{conup} holds for the category of pure numerical motives over $\Q$. Let $X$ be a smooth projective variety over $\Q$. If $\alpha$, $\beta\in H_{dR}(X)$ satisfy $F_{p,X}(\alpha)=p^n \beta$ for all but finitely many $p$, then $\alpha$ is algebraic.
\end{thm}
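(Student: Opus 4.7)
The strategy has three steps: (i) use Conjecture \ref{conup} to lift the hypothesis to an equality of de Rham motivic periods in $\Pd$; (ii) interpret this identity Tannakianly to produce a $G_{dR}$-stable line in $H_{dR}(X)$ on which $G_{dR}$ acts by the Tate character; (iii) invoke the standard conjectures to conclude that this line comes from an algebraic cycle.

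Working in the category $\cM$ of pure numerical motives over $\Q$, which contains $h(X)$, the hypothesis $F_{p,X}(\alpha)=p^n\beta$ for almost all $p$ is equivalent to $\langle\gamma^*,F_{\A,X}(\alpha)\rangle=(p^n)_p\cdot\langle\gamma^*,\beta\rangle$ in $\A$ for every $\gamma^*\in H_{dR}(X)^{\vee}$. The left-hand side equals $\per_{\A}([h(X),\alpha,\gamma^*]^{\dr})$ and the right-hand side equals $\per_{\A}(\langle\gamma^*,\beta\rangle\cdot[\Q(-n),1,1]^{\dr})$, where both motivic periods live in $\Pd$. Conjecture \ref{conup} upgrades this to
\[
[h(X),\alpha,\gamma^*]^{\dr}=\langle\gamma^*,\beta\rangle\cdot[\Q(-n),1,1]^{\dr}\quad\text{in }\Pd
\]
for every $\gamma^*\in H_{dR}(X)^{\vee}$.

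Reading both sides as functions on $G_{dR}=\au(\omega_{dR})$, this becomes $\langle\gamma^*,g_X(\alpha)\rangle=\chi_n(g)\langle\gamma^*,\beta\rangle$ for all $R$-points $g$ of $G_{dR}$ and all $\gamma^*$, where $\chi_n$ is the character of $\Q(-n)$. Since the $\gamma^*$ form a separating set of functionals, this gives $g_X(\alpha)=\chi_n(g)\beta$; evaluating at the identity forces $\alpha=\beta$, and hence $g_X(\alpha)=\chi_n(g)\alpha$ for all $g$. Thus $\Q\alpha\subset H_{dR}(X)$ is a $G_{dR}$-stable line of character $\chi_n$, which by Tannakian duality is the de Rham realization of a subobject $\Q(-n)\hookrightarrow h(X)$ in $\cM$. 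Under the standard conjectures, the de Rham realization identifies $\mathrm{Hom}_{\cM}(\Q(-n),h(X))$ with the space of codimension-$n$ algebraic classes in $H^{2n}_{dR}(X)$, so $\alpha$ is algebraic.

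The main obstacle is step (ii): translating an equality of motivic periods in $\Pd$ into a pointwise identity on $G_{dR}$, and using this to produce a genuine $G_{dR}$-stable line in $H_{dR}(X)$ on which $G_{dR}$ acts by the Tate character. The first step is a direct invocation of Conjecture \ref{conup}, and the third follows from the standard conjectures together with the Tannakian semisimplicity of pure numerical motives.
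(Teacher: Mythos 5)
Your proof is correct and takes essentially the same Tannakian route as the paper. The paper factors the argument through Theorem \ref{thfull} (Conjecture \ref{conup} is equivalent to fullness of $\cM\to\spa$) and then compares $\hom_{\cM}(\Q(0),H(X)(n))$ with $\hom_{\spa}(Sp(\Q(0)),Sp(H(X)(n)))$, whereas you unroll the same Tannakian argument directly — lifting the period identity to $\Pd$, reading it as a pointwise identity on $G_{dR}$, producing a $G_{dR}$-stable line, and invoking the standard conjectures to identify the resulting subobject $\Q(-n)\hookrightarrow h(X)$ with an algebraic class — but the underlying content is the same.
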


\begin{thm}[Corollary \ref{thog}]
Let $\cM$ be a category of motives satisfying Definition \ref{defmot}. If Conjecture \ref{conup} holds for $\cM$, then the Ogus realization of $\cM$ is a fully faithful embedding.
\end{thm}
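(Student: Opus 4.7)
The plan is to use the Tannakian formalism to reduce fully faithfulness to a statement about invariant vectors, and then to interpret Conjecture \ref{conup} as the assertion that the $\A$-valued point $F_\A\in G_{dR}(\A)$ is scheme-theoretically dominant in $G_{dR}$. Since $\cM$ and the Ogus target are both rigid tensor categories, the internal-Hom identity $\mathrm{Hom}(M,N)=\mathrm{Hom}(\one,M^\vee\otimes N)$ reduces the problem to showing that for every $M\in\cM$ the natural map $\mathrm{Hom}_{\cM}(\one,M)\to\mathrm{Hom}_{\mathrm{Og}}(\mathrm{Og}(\one),\mathrm{Og}(M))$ is a bijection. Faithfulness is immediate because the de Rham functor $\omega_{dR}$ is already a fibre functor, hence faithful. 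By Tannakian reconstruction, $\mathrm{Hom}_{\cM}(\one,M)=\omega_{dR}(M)^{G_{dR}}$, while a morphism $\mathrm{Og}(\one)\to\mathrm{Og}(M)$ unwinds to an element $\alpha\in\omega_{dR}(M)$ with $F_{\A,M}(\alpha\otimes 1)=\alpha\otimes 1$ in $\omega_{dR}(M)\otimes\A$ (the Hodge-filtration condition, if imposed by the Ogus category, being automatic for motivic classes).

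The remaining content is the implication: if $\alpha\in\omega_{dR}(M)$ is fixed by $F_{\A,M}$, then $\alpha$ is $G_{dR}$-invariant. I would form the stabilizer $H_\alpha\subseteq G_{dR}$ of $\alpha$, i.e.\ the scheme-theoretic zero locus of the orbit morphism $g\mapsto g\cdot\alpha-\alpha$; it is a closed subscheme cut out by an ideal $I_{H_\alpha}\subseteq\Pd$, and invariance of $\alpha$ is equivalent to $I_{H_\alpha}=0$. Frobenius-fixedness translates precisely to the statement that $F_\A\in G_{dR}(\A)$, viewed as the ring map $\per_\A:\Pd\to\A$, factors through $H_\alpha$; dually, this is the inclusion $I_{H_\alpha}\subseteq\ker(\per_\A)$. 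Conjecture \ref{conup} asserts $\ker(\per_\A)=0$, so $I_{H_\alpha}=0$, $H_\alpha=G_{dR}$, and $\alpha$ is invariant.

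The principal obstacle is conceptual rather than computational: one must correctly identify the Ogus Hom group, realize the stabilizer as a closed subscheme whose defining ideal sits inside $\ker(\per_\A)$, and recognize that injectivity of $\per_\A$ is exactly a scheme-theoretic density statement for $F_\A$ in $G_{dR}$. Once this dictionary between algebraic group invariants and the single Frobenius element is set up, the theorem reduces to one application of the conjecture. A minor subsidiary point, to be checked against the precise definition of $\mathrm{Og}$ adopted in the main text, is compatibility with the Hodge filtration; but any $G_{dR}$-invariant element of $\omega_{dR}(M)$ automatically lies in $\fil^0$, so this does not add substantive content.
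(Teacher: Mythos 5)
Your argument is correct, but it takes a genuinely different route from the paper's. The paper first proves Theorem \ref{thfull} (Conjecture \ref{conup} is \emph{equivalent} to fullness of the span functor $\cM\to\spa$, by citing the analogous statement for the Betti--de~Rham category from \cite{Hub17} and declaring ``the same proof works here''), and then deduces Corollary \ref{thog} as a one-line consequence via faithfulness of the forgetful functor $\og\to\spa$. You instead argue directly against $\og$: reduce fullness to the unit object via rigidity, identify $\hom_{\og}(\og(\one),\og(M))$ with $F_{\A,M}$-fixed vectors, and show such a vector $\alpha$ is $G_{dR}$-invariant by forming the scheme-theoretic stabilizer $H_\alpha\subseteq G_{dR}$, noting $F_\A\in H_\alpha(\A)$ forces $I_{H_\alpha}\subseteq\ker(\per_\A)$, and then invoking injectivity. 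That stabilizer argument is exactly the scheme-theoretic-density content that the paper delegates to the cited reference, so your proof is more self-contained and spells out what the paper leaves implicit; what it does not recover is the converse direction in Theorem \ref{thfull} (fullness of $\cM\to\spa$ implies Conjecture \ref{conup}), which is where the span category is genuinely needed and which the paper emphasizes in the remark following the corollary. Your observation that faithfulness is automatic from $\omega_{dR}$ being a fibre functor, and that the Hodge filtration issue is vacuous for $\og$ as defined in the text (pairs $(V,F)$ only), is also correct.
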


\noindent The following results give consequences of Conjecture \ref{conups} applied to the categories $\MT(\Z)$ and $\MT(\Q)$ of mixed Tate motives over $\Z$ resp.\ $\Q$.

\begin{thm}[Theorem \ref{thnwi}]
If Conjecture \ref{conups} holds for $\MT(\Q)$, then for every rational number $r\neq 0,\pm1$, there exist infinitely many $p$ for which $r^{p-1}\not\equiv 1\mod p^2$.
\end{thm}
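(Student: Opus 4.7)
The plan is to apply Conjecture~\ref{conups} for $\MT(\Q)$ in contrapositive form to the motivic logarithm of $r$. Fix $r\in\Q^\times$ with $r\neq\pm1$, and let $K_r\in\MT(\Q)$ denote the Kummer motive of $r$, an extension of $\Q(0)$ by $\Q(1)$ whose extension class records $\log r$. Choosing a basis $\{e_0,e_1\}$ of $\omega_{dR}(K_r)$ with $e_0$ lifting the standard basis of $\omega_{dR}(\Q(0))$ and $e_1$ spanning $\fil^1$, the off-diagonal matrix entry of $G_{dR}$ acting on $\omega_{dR}(K_r)$ defines a distinguished element $\log^{\dr}(r)\in\Pd$.

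The first step is to compute $\per_\A(\log^{\dr}(r))$. A direct unwinding of the crystalline Frobenius on $K_r$ shows that $F_{p,K_r}$ is upper-triangular in the basis $\{e_0,e_1\}$ with diagonal $(1,p)$ and off-diagonal entry equal to the Iwasawa $p$-adic logarithm $\log_p(r)$; hence $\per_\A(\log^{\dr}(r))=(\log_p(r))_p\in\A$. The second step translates the filtration condition to an arithmetic one: for odd $p$ coprime to the numerator and denominator of $r$, write $r^{p-1}=1+a$ with $v_p(a)\geq 1$, so the series $\log_p(1+a)=a-a^2/2+\cdots$ gives $v_p(\log_p(1+a))=v_p(a)$; combined with $\log_p(r^{p-1})=(p-1)\log_p(r)$ and $p-1\in\Z_p^\times$, this yields $v_p(\log_p(r))=v_p(r^{p-1}-1)$. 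Therefore $(\log_p(r))_p\notin\fil^2\A$ if and only if $r^{p-1}\not\equiv 1\pmod{p^2}$ for infinitely many $p$.

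The key unconditional input is that $\log^{\dr}(r)\notin\fil^2\Pd$ when $r\neq\pm1$. This rests on two facts about $\MT(\Q)$: (i) the Hodge filtration on $\omega_{dR}(K_r)$ is concentrated in degrees $0$ and $1$, so $\log^{\dr}(r)$ sits in the weight-one graded piece of $\Pd$ with respect to the natural grading, and this piece meets $\fil^2$ trivially; (ii) $\log^{\dr}(r)\neq 0$, which follows from the explicit description of the motivic Galois group of $\MT(\Q)$ as $U^{\dr}\rtimes\mathbb{G}_m$---the weight-one piece of $\Pd$ is spanned freely by $\{\log^{\dr}(q):q\text{ prime}\}$, so $\log^{\dr}(r)=\sum_q v_q(r)\log^{\dr}(q)\neq 0$ whenever $r\neq\pm1$.

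Combining everything, Conjecture~\ref{conups} applied at $n=2$ gives $\per_\A(\log^{\dr}(r))\notin\fil^2\A$, which by the arithmetic translation above is precisely the conclusion. I expect the delicate part to be the third step: while the nonvanishing of $\log^{\dr}(r)$ reduces to the well-understood structure of the motivic Galois group of $\MT(\Q)$, the assertion that the weight-one graded piece of $\Pd$ intersects $\fil^2$ trivially needs a careful compatibility between the weight and Hodge filtrations on the de Rham period ring, since $\log^{\dr}(r)$ could a priori have other representations as motivic periods of larger motives in which $\fil^2$ is nontrivial.
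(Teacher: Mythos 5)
Your proof follows the same essential route as the paper's: both pass through the Kummer motive, identify the off-diagonal entry of the crystalline Frobenius matrix as a $p$-adic logarithm, and translate the filtration condition $\notin\fil^2\A$ into the non-Wieferich condition via the series expansion of $\log_p$ (the discrepancy between your $\log_p(r)$ and the paper's $\log_p(r^{p-1})$ is the harmless unit factor $p-1\in\Z_p^\times$, which you correctly track). Where you diverge is in justifying the key motivic input $\log^{\dr}(r)\notin\fil^2\Pd$: the paper argues tersely from $\fil^2K(r)_{dR}=0$ together with a direct check that $\log_p(r^{p-1})$ is a nonzero element of $p\Z_p$ for all good $p$, whereas you argue structurally via the weight grading on $\Pd$ and the description of the weight-one graded piece as freely spanned by $\{\log^{\dr}(q)\}_{q\text{ prime}}$ (plus $\Ld$), yielding $\log^{\dr}(r)=\sum_q v_q(r)\log^{\dr}(q)\neq0$. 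Your flagged concern about compatibility of weight and Hodge filtrations is resolved for mixed Tate categories because the Hodge filtration on $\Pd$ is induced by the weight grading (as the paper notes for $\MT(\Z)$ in Section 7, and the same holds for $\MT(\Q)$), so a nonzero element of pure weight one cannot lie in $\fil^2$. Both arguments are valid; yours is a bit more self-contained on the motivic side, while the paper's is more elementary, leaning on the explicit nonvanishing of the $p$-adic logarithm.

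One small inconsistency to tidy: you describe $K_r$ as an extension of $\Q(0)$ by $\Q(1)$ but then write the Frobenius diagonal as $(1,p)$; that diagonal belongs to the Tate-twisted extension $0\to\Q(0)\to K(r)\to\Q(-1)\to0$ used in the paper. The argument is unaffected, but the stated extension class and the stated matrix should match.
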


\begin{thm}[Theorem \ref{thnwo}]
If Conjecture \ref{conups} holds for $\MT(\Z)$, then for every odd $k\geq 3$, there exist infinitely many primes $p$ for which $p\nmid B_{p-k}$.
\end{thm}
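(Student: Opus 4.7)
The plan is to argue by contradiction using Conjecture \ref{conups} applied to a motivic zeta value in $\Pd$ for the category $\MT(\Z)$. Fix an odd integer $k\geq 3$. The group $\mathrm{Ext}^1_{\MT(\Z)}(\Q(0),\Q(k))$ is one-dimensional, and its generator is a nontrivial two-step extension $E_k$. I choose a de Rham basis $\{e_0,e_k\}$ of $\omega_{dR}(E_k)$ with $e_k$ spanning the sub $\Q(k)$ and $e_0$ the unique lift of $1\in\Q(0)$ lying in $\fil^0 E_k$, and I let $\zeta^{\dr}(k)\in\Pd$ denote the matrix coefficient of the tautological $G_{dR}$-action pairing $e_0$ against the functional $e_k^{\ast}$ dual to $e_k$. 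The filtration fact I need is $\zeta^{\dr}(k)\in\fil^0\Pd\setminus\fil^1\Pd$: in the graded piece $\fil^0\Pd/\fil^1\Pd$ the class of $\zeta^{\dr}(k)$ corresponds to the nonzero extension class of $E_k$, which is why it cannot lift to $\fil^1$.

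Next I evaluate $\per_\A$ at $\zeta^{\dr}(k)$. By construction its $p$-th component is the off-diagonal entry of the crystalline Frobenius on $E_k$ in the basis $\{e_0,e_k\}$, which standard $p$-adic Hodge theory for Tate twists identifies, up to a $p$-adic unit, with the Kubota--Leopoldt value $\zeta_p(k)$. Kummer's $p$-adic interpolation of $\zeta$ at negative integers, combined with the congruence $k\equiv 1-(p-k)\pmod{p-1}$ and the observation that $1-p^{k-1}$ is a $p$-adic unit for large $p$, then gives the classical mod-$p$ formula
\[
\zeta_p(k)\equiv -\frac{B_{p-k}}{p-k}\pmod{p}
\]
for all sufficiently large primes $p$. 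Since $k$ is odd, $p-k$ is even for every odd $p$, so $B_{p-k}$ is a well-defined and generically nonzero Bernoulli number, and $p-k$ is a $p$-adic unit whenever $p>k$.

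Now suppose, toward contradiction, that $p\mid B_{p-k}$ for all but finitely many primes $p$. The displayed congruence then forces $v_p(\zeta_p(k))\geq 1$ for all sufficiently large $p$, which is exactly $\per_\A(\zeta^{\dr}(k))\in\fil^1\A$. Applying Conjecture \ref{conups} yields $\zeta^{\dr}(k)\in\fil^1\Pd$, contradicting the first paragraph. Hence there must exist infinitely many primes $p$ with $p\nmid B_{p-k}$.

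The main obstacle is the identification in the second paragraph: pinning down precisely that the sequential Frobenius entry of the canonical extension $E_k$ equals the Kubota--Leopoldt value $\zeta_p(k)$ up to a $p$-adic unit, and then extracting the correctly signed mod-$p$ congruence with $B_{p-k}$, accounting for the Euler factor $1-p^{k-1}$. This rests on standard material in the theory of $p$-adic $L$-functions and the crystalline cohomology of mixed Tate motives, and I anticipate that the body of the paper provides the motivic Frobenius machinery needed to reduce this step to a classical computation.
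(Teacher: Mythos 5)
Your proof is essentially the paper's argument, rephrased in a normalization shifted by $(\Ld)^{-k}$: the paper works directly with $\Zd(k)\in\fil^k\Pd\setminus\fil^{k+1}\Pd$ and $\per_p(\Zd(k))=\zeta_p(k)\equiv p^k B_{p-k}/k\pmod{p^{k+1}}$ (Deligne's $p^k$-divisible $p$-adic zeta value), whereas you work with the weight-zero matrix coefficient of the Kummer extension $E_k$, which is $(\Ld)^{-k}\Zd(k)$, and invoke the Kubota--Leopoldt/Kummer congruence mod $p$. The logical skeleton — nonvanishing of a homogeneous de Rham period, the Bernoulli congruence, and Conjecture \ref{conups} to rule out excess $p$-divisibility for almost all $p$ — is the same; the one place you flag as needing care (pinning down that the Frobenius entry of $E_k$ agrees, up to a $p$-adic unit and Euler factor, with the Kubota--Leopoldt value) is precisely what the paper packages into the statement $\per_p(\Zd(k))=\zeta_p(k)$ from Sec.~\ref{ssdr}.
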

\begin{thm}[Theorem \ref{thfmzv}]
Assume that the Grothendieck period conjecture (Conjecture \ref{conper} below) holds for $\MT(\Z)$. Then the truth of Conjecture \ref{conups} for $\MT(\Z)$ is equivalent to Kaneko-Zagier's conjecture on finite multiple zeta values.
\end{thm}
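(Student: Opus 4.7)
The plan is to translate Conjecture \ref{conups} for $\MT(\Z)$ and Kaneko--Zagier's conjecture into a common framework, then observe they are equivalent. Grothendieck's conjecture (Conjecture \ref{conper}) serves as the dictionary identifying $\Pd$ with an explicit algebra of complex periods, namely the $\Q$-algebra $\mathcal{Z}[\L^{\mathfrak{dr}}]$ generated by the classical multiple zeta value algebra $\mathcal{Z}$ and the motivic Lefschetz period $\L^{\mathfrak{dr}}$ (corresponding to $2\pi i$).

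I would proceed in three steps. First, by Brown's theorem on motivic multiple zeta values, $\Pd$ for $\MT(\Z)$ is generated as a commutative $\Q$-algebra by the $\zd(k_1,\ldots,k_r)$ modulo the motivic relations, and under Grothendieck's conjecture the complex period map identifies $\Pd$ with $\mathcal{Z}[\L^{\mathfrak{dr}}]$. Second, the sequential Frobenius matrix coefficients on the de Rham realization of the motive underlying $\zd(k_1,\ldots,k_r)$ are a version of Furusho's $p$-adic multiple zeta values; modulo $\fil^1\A$, the induced map
\[
\Pd \xrightarrow{\per_\A} \A \twoheadrightarrow \A/\fil^1\A
\]
realizes the Kaneko--Zagier correspondence, producing Kaneko--Zagier's finite multiple zeta values $\zeta_\A(k_1,\ldots,k_r)$ in the quotient $\mathcal{Z}/\zeta(2)\mathcal{Z}$ (via the iso $\mathcal{Z}_\A \cong \mathcal{Z}/\zeta(2)\mathcal{Z}$ predicted by Kaneko--Zagier).

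Third, I would identify $\fil^n\Pd$ with the ideal $(\L^{\mathfrak{dr}})^n\Pd$. This uses the purely Tate nature of $\MT(\Z)$: any element of $\fil^n M_{dR}$ for $M\in\MT(\Z)$ lies in a Tate sub-object $\Q(k)_{dR}$ with $k\geq n$, so its Frobenius matrix coefficient is a power $(\L^{\mathfrak{dr}})^k\in(\L^{\mathfrak{dr}})^n\Pd$. Combined with the previous two steps, Conjecture \ref{conups} at $n=1$ translates into the injectivity (equivalently bijectivity) of the map $\mathcal{Z}/\zeta(2)\mathcal{Z}\to\mathcal{Z}_\A$, which is Kaneko--Zagier's conjecture. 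Higher levels $n\geq 2$ reduce to $n=1$ by induction, since $\L^{\mathfrak{dr}}$ is a non-zero-divisor in $\Pd$ and $\per_\A(\L^{\mathfrak{dr}})=(p)_p$ generates the filtration shift on $\A$.

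The principal obstacle is the third step: the precise identification of $\fil^1\Pd$ with $\L^{\mathfrak{dr}}\Pd$ for $\MT(\Z)$, which requires care about whether the Hodge filtration is generated (as a filtration) only by the simplest Tate contributions. A secondary technical point, largely resolved in the $p$-adic MZV literature, is matching the sequential Frobenius matrix coefficients with Kaneko--Zagier's finite multiple zeta values through Furusho's $p$-adic MZVs, reconciling conventions of sign, word reversal, and regularization.
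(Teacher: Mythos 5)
Your step~3 is built on a false premise. You propose to identify $\fil^n\Pd$ with the ideal $(\Ld)^n\Pd$, but in $\Pd$ for $\MT(\Z)$ the element $\Ld$ is \emph{invertible} (the de Rham period ring is a $\Q[\Ld,(\Ld)^{-1}]$-module by Brown's computation), so $(\Ld)^n\Pd=\Pd$ for all $n$, which is certainly not $\fil^n\Pd$. The Hodge filtration on $\Pd$ is instead induced by the weight grading, in which $\Zd(\bs)$ has weight $|\bs|$ and $\Ld$ has weight $1$: $\fil^n\Pd$ is the span of elements of total weight $\geq n$. Your supporting reasoning is also wrong: in a non-split extension such as the Kummer motive $K(r)$ sitting in $0\to\Q(0)\to K(r)\to\Q(-1)\to 0$, the $1$-dimensional subspace $\fil^1K(r)_{dR}$ is \emph{not} the de Rham realization of a Tate sub-object (if it were, the extension would split); rather it projects isomorphically onto the quotient $\Q(-1)_{dR}$. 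So $\fil^n M_{dR}$ need not lie in a Tate sub-object, and your reduction to monomials in $\Ld$ fails.

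There is a further gap in step~2, which you flag as ``largely resolved in the literature'' but which is actually where the substantive content lives. Relating $\per_\cA$ on $\Pd_0$ to the Kaneko--Zagier finite multiple zeta values $\zeta_\cA(\bs)$ requires two nontrivial inputs the paper uses explicitly: Yasuda's result identifying $\Pd_0$ as the $\Q$-span of the de Rham symmetrized MZVs $\bar\zeta^{\dr,s}(\bs)$, and Jarossay's uniformly $p$-adically convergent expansion of $p^{|\bs|}H_{p-1}(\bs)$ in terms of $p$-adic MZVs, which shows $\per_\cA(\bar\zeta^{\dr,s}(\bs))=\zeta_\cA(\bs)$. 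Without these, the map to $\cA$ is not pinned down. Finally, the paper first reduces Conjecture~\ref{conups} (using Tate twists, Theorem~\ref{thfups}) to the single assertion that $\per_\cA:\Pd_0\to\cA$ is injective; working on this degree-zero graded piece is what circumvents the need for any identification like $\fil^n\Pd=(\Ld)^n\Pd$, and your proposal is missing this reduction entirely.

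Your step~1 is directionally right (Conjecture~\ref{conper} is what promotes the motivic equivalence to an equivalence with the classical Kaneko--Zagier conjecture stated in terms of real MZVs modulo $\zeta(2)$), though the phrasing ``$\Pd\cong\mathcal{Z}[\L^{\dr}]$'' should be something like $\Pd\cong\big(\mathcal{Z}/\zeta(2)\big)[\Ld,(\Ld)^{-1}]$, since $\Zd(2)=0$ in $\Pd$ while $\zeta(2)\neq 0$ in $\mathcal{Z}$.
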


\subsection{Outline}
In Section \ref{secex} we compute several examples of \uni{} periods. Section \ref{secmot} gives the properties necessary for a category of motives in order to  construct the \uni{} period map, and to state Conjecture \ref{conup}. In Section \ref{secval}, we consider divisibility properties of \uni{} periods and state Conjecture \ref{conups}.

Sections \ref{secpcc}, \ref{appp}, and \ref{appc} are independent of one another, and can be read in any order. In Section \ref{secpcc}, we deduce several consequences of Conjectures \ref{conup} and \ref{conups}. Sections \ref{appp} and \ref{appc} describe two variations of \uni{} periods. The first variation involves reducing \uni{} periods modulo $p$, and this variation includes Kaneko-Zagier's finite multiple zeta values as a special case. The second variation involves allowing uniformly convergent infinite sums of \uni{} periods.

In Section \ref{secMTZ}, we describe the \uni{} periods of the category of mixed Tate motives over $\Z$.

\subsection{Acknowledgements}

We thank Bhargav Bhatt and Kartik Prasanna for helpful discussions. We thank Jeffrey Lagarias for comments on this manuscript.

%
%
\section{Examples}
\label{secex}

We compute some examples of \uni{} periods.
\subsection{The projective line}
\label{ssp1}
The vector space $H^2_{dR}(\mathbb{P}^1)$ is $1$-dimensional, and $F_p$ acts on $H^2_{dR}(\mathbb{P}^1)$ by multiplication by $p$. This shows that 
\[
\p:=(p)_p\in\A
\]
is a \uni{} period. The complex number $2\pi i$ is a period of $H^2(\mathbb{P}^1)$, so we view $\p$ as the \uni{} analogue of $2\pi i$. Note that $\p$, like $2\pi i$, is transcendental over $\Q$.

\subsection{Point counts}
Let $X$ be a smooth projective variety over $\Q$. Choose an integral model $\tilde{X}$ of $X$, and consider the sequence
\begin{equation}
\label{eqpc}
\bigg(\#\tilde{X}(\F_{p})\bigg)_p\in\A,
\end{equation}
which is independent of  the choice of $\tilde{X}$. The Lefschetz fixed point formula expresses $\#\tilde{X}(\F_p)$ as an alternating sum of traces of $F_{p,X}$ on cohomology of $X$, so \eqref{eqpc} is a \uni{} period of $X$.

\subsection{The $p$-adic logarithm}
For $r\in\Q_{>0}$, the logarithm $\log(r)\in\R$ is a period. The $p$-adic logarithm is defined by
\[
\log_p(x)=\sum_{n\geq 1}(-1)^{n+1}\frac{(x-1)^n}{n},
\]
which converges $p$-adically if $v_p(x-1)\geq 1$. For fixed $r$, the element
\[
\big(\log_p(r^{p-1})\big)_p\in\A
\]
is a \uni{} period (see Sec.\ \ref{sswie}).

\subsection{The $\ell$-adic Frobenius}
Let $X$ be a smooth projective variety over $\Q$, and fix an integer $n\geq 0$ and a prime $\ell$. The for $p\neq\ell$,  the geometric Frobenius at $p$ acting on the $\ell$-adic cohomology $H^n_{\ell}(X;\Q_\ell)$ has the same characteristic polynomial as $F_{p,X}$. This implies the sequence of characteristic polynomials of the Frobenius at $p$ acting on $H^n_{\ell}$, as $p$ varies, is a polynomial with coefficients in $\cP_\A(X)$.


\subsection{Modular forms}
\label{ssmf}
Let $f(z)=\sum_{n\geq 0} a_n e^{2\pi inz}$ be a normalized cusp form of weight $k$ for $\Gamma_0(N)$, with $a_n\in\Q$. Then there is a variety $V$ over $\Q$ (a power of a family of elliptic curves over the modular curve $X_0(N)$) such that $a_p$ is the trace of the Frobenius at $p$ acting on an $\ell$-adic cohomology group of $V$, and it follows that the sequence $\big(a_p\big)_p\in\A$ is a \uni{} period.

%
%
\section{Motivic periods and period maps}
\label{secmot}

Conjectures \ref{conup} and \ref{conups} are formulated relative to a category of motives. We begin by listing the properties required of this category.

\begin{definition}
\label{defmot}
We say $\cM$ is a \emph{category of motives (over $\Q$)} if $\cM$ is a neutral Tannakian category (\cite{Del89}, Definition 2.19) over $\Q$ equipped with the following structure.
\begin{wea}
\item There is a fibre functor $\omega_{dR}$ (the de Rham realization), $M\mapsto M_{dR}$, equipped with a decreasing, separated, exhaustive filtration $\fil^\bullet$, the Hodge filtration. The Hodge filtration is compatible with the tensor product, and if $f:M\to N$ is a morphism in $\cM$, then the induced map $f_{dR}:M_{dR}\to N_{dR}$ satisfies $f_{dR}(\fil^n M_{dR})=f(M_{dR})\cap\fil^n N_{dR}$ for all $n$.
\item There is a fibre functor $\omega_{B}$ (the Betti realization), $M\mapsto M_B$, and a functorial $\C$-linear isomorphism
\begin{equation}
\label{eqdrb}
comp_M:M_{dR}\otimes\mathbb{C}\xrightarrow{\sim}M_B\otimes\mathbb{C}
\end{equation}
compatible with the tensor product.

\item \label{propnum} For each $M\in\cM$, there is a distinguished automorphism
\[
F_{p,M}:M_{dR}\otimes\Q_p\iso M_{dR}\otimes\Q_p,
\]
the crystalline Frobenius map, defined for all sufficiently large $p$. The crystalline Frobenius is functorial in $M$ (for large $p$) and compatible with the tensor product.
\end{wea}

\end{definition}
Definition \ref{defmot} holds, for example, for categories satisfying the axioms in \cite{Del89}, \S1.3.

\begin{remark}
The data of $F_{p,M}$ is equivalent to the data of an $\A$-linear automorphism $F_{\A,M}$ of $M_{dR}\otimes\A$. The \emph{Ogus category} $\og$ is defined to be the category whose objects are pairs $(V,F)$, where $V$ is a finite-dimensional vector space over $\Q$ and $F$ is an $\A$-linear automorphism of $V\otimes\A$. If $\cM$ is a category of motives, the \emph{Ogus realization} of $\cM$ is the functor $\cM\to\og$, $M\mapsto (M_{dR},F_{\A,M})$ (see \cite{And04}, Sec.\ 7.1.5).
\end{remark}

For the remainder of this section, we fix a category of motives $\cM$.

\subsection{Complex periods}
Suppose $M\in\cM$. If we choose $\Q$-bases for $M_{dR}$ and $M_B$, we can represent $comp_M$ as a square matrix with entries in $\C$. A (complex) \emph{period} of $M$ is a $\Q$-linear combination of matrix entries for $comp_M$.

For any two fibre functors $\omega$ and $\eta$ on $\cM$, the functor of $\Q$-algebras $R\mapsto\hom^\otimes(\omega\otimes R,\eta\otimes R)$ is representable by an affine pro-algebraic scheme $\underline{\hom}^\otimes(\omega,\eta)$ (\cite{Del82}, Theorem 3.2). The scheme $\underline{\hom}^\otimes(\omega_{dR},\omega_B)$ is called the \emph{torsor of periods} of $\cM$, and the coordinate ring $\cP^\fm$ of $\underline{\hom}^\otimes(\omega_{dR},\omega_B)$ is called the \emph{ring of motivic periods} of $\cM$. The comparison isomorphism \eqref{eqdrb} determines an element of $\underline{\hom}^\otimes(\omega_{dR},\omega_B)(\C)$, and evaluation at this point induces a ring homomorphism
\begin{equation}
\label{eqcper}
\per:\cP^{\fm}\to\C,
\end{equation}
the \emph{period map} (see \cite{Bro17}, \S1.2).

The image of $\per$ is the set of all periods of all objects of $\cM$. We write $\cP_{\C}(\cM)$ for the image of $\per$, and we call $\cP_{\C}(\cM)$ the \emph{ring of periods of $\cM$}.  The Grothendieck period conjecture for $\cM$ is the following assertion.

\setcounter{conj}{-1}
\begin{conj}[Period conjecture]
\label{conper}
The map $per$ is injective.
\end{conj}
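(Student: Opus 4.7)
The statement is the Grothendieck period conjecture, one of the central open problems in arithmetic geometry, so what follows is a strategy rather than a proof.

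The natural reformulation is geometric. Injectivity of $\per$ is equivalent to the $\C$-point of the torsor $\underline{\hom}^\otimes(\omega_{dR},\omega_B)$ determined by $comp$ being Zariski-dense, or equivalently to the equality
\[
\mathrm{tr.deg}_{\Q}\,\cP_{\C}(\cM) = \dim G_{dR},
\]
where $G_{dR}=\aut^\otimes(\omega_{dR})$. The inequality $\leq$ is automatic, since $\cP_{\C}(\cM)$ is the image of the coordinate ring of a $G_{dR}$-torsor under the ring map $\per$; the content of the conjecture is the reverse inequality. The first step of any attack, therefore, is to set up this Tannakian dictionary carefully and reduce the claim to showing that, for each $M\in\cM$, the transcendence degree over $\Q$ of the field generated by the periods of $M$ equals the dimension of the motivic Galois group $G_{dR}(\langle M\rangle^\otimes)$ of the Tannakian subcategory that $M$ generates.

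The strategy is then to handle successively richer classes of motives. For Artin motives the claim is elementary. For the category $\MT(\Z)$, Brown's theorem together with the Deligne--Goncharov coproduct on motivic multiple zeta values would reduce injectivity to the algebraic independence of odd zeta values over $\Q[(2\pi i)^2]$, a Zagier-type conjecture. For $\MT(\Q)$ one must also treat the logarithms of primes. For abelian motives, the Chudnovsky theorem covers the CM elliptic case, while the non-CM case is open; for the fully general $\cM$, the conjecture subsumes the Hodge conjecture for abelian varieties by comparison of $G_{dR}$ with the Mumford--Tate group.

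The overwhelming obstacle is transcendence theory: no known method produces lower bounds on transcendence degrees in the required generality. Even the algebraic independence of $\pi$, $e$, and $\zeta(3)$ is open, and each of the reductions above leads to a transcendence statement of comparable depth. I expect the realistic outcome of this plan to be a clean reduction of the conjecture to a family of transcendence problems, not the establishment of any new case. This is also why the paper turns instead to the \uni{} analogue, Conjecture \ref{conup}, where the analogous question has a more accessible character.
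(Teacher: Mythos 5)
You have correctly recognized that Conjecture~\ref{conper} is the Grothendieck period conjecture, which the paper states without proof: it appears only as background, with the remark that it ``is expected to be difficult to resolve.'' There is therefore no proof in the paper to compare your attempt against, and declining to claim a proof is the right call. Your reformulations are accurate: injectivity of $\per$ is equivalent to Zariski density of the comparison point in the torsor $\underline{\hom}^\otimes(\omega_{dR},\omega_B)$, and (object by object) to the equality $\mathrm{tr.deg}_\Q$ of the period field of $M$ with $\dim G_{dR}(\langle M\rangle^\otimes)$, with the inequality $\leq$ automatic from the torsor structure. The case survey (Artin motives known, $\MT(\Z)$ reducible via Brown to a Zagier-type independence of odd zeta values, CM elliptic case by Chudnovsky, general abelian case tied to Mumford--Tate and Hodge) is consistent with the literature. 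In short, your proposal is an honest strategy sketch for an open problem, and it correctly matches the paper's treatment of this statement as a conjecture rather than a theorem.
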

Conjecture \ref{conper} is an algebraic independence statement for periods, and is expected to be difficult to resolve.
\smallskip

The ring $\Pm$ is a torsor\footnote{In fact $\Pm$ is a bitorsor for $\au(\omega_B)$ and $\au(\omega_{dR})$.} for the affine (pro-algebraic) group scheme $\au(\omega_B)$. If Conjecture \ref{conper} holds, then the action descends to an action of $\au(\omega_B)$ on $\cP_\C(\cM)$, and we get a ``Galois theory of periods''. For the category of Artin motives (that is, motives of $0$-dimensional varieties), Conjecture \ref{conper} is known to hold, and $\per$ maps $\Pm$ isomorphically onto $\Qb\subset\C$. The group $\au(\omega_B)$ is canonically isomorphic to $\gal(\Qb/\Q)$ (viewed as a profinite constant group scheme over $\Q$), and thus the Galois theory of periods for Artin motives is just the classical Galois theory (see \cite{Bro17}, \S5.1).

\subsection{\Uni{} periods}
The affine group scheme $G_{dR}:=\au(\omega_{dR})$ is called the \emph{de Rham Galois group} of $\cM$, and the coordinate ring $\Pd$ of $G_{dR}$ is called the \emph{ring of de Rham periods} of $\cM$. The automorphisms $F_{\A,M}$ determine an element $F_\A\in\Pd(\A)$. The following definition is new.
\begin{definition}
The \emph{\uni{} period map} is the ring homomorphism
\[
\per_{\A}:\Pd\to\A
\]
induced by evaluation at $F_{\A}$.
\end{definition}
Here $\A$ plays the same role as $\C$ plays for the complex period map \eqref{eqcper}.
We write $\cP_\A(\cM)\subset\A$ for the image of $per_{\A}$, and we call $\cP_\A(\cM)$ the \emph{ring of \uni{} periods of $\cM$}. Our weaker analogue of the period conjecture for \uni{} periods is the following assertion about $\cM$.

\begin{repconj}{conup}[\Uni{} period conjecture]
The map $per_{\A}$ is injective.
\end{repconj}
Conjecture \ref{conper} is equivalent to the statement that $\per_{\A}$ induces an isomorphism of algebras $\Pd\iso\cP_\A(\cM)$.
As with Conjecture \ref{conper}, the truth of Conjecture \ref{conup} would give a	Galois theory of \uni{} periods (see Sec.\ \ref{ssgal}). We expect that Conjecture \ref{conup} will be difficult to resolve.

\subsection{$p$-adic periods}
\label{ssfp}
Suppose $p$ is a prime of good reduction for every object of $\cM$, meaning that for every $M\in\cM$, $F_{p,M}$ is defined and is functorial in $M$. In this case we get a \emph{$p$-adic period map}
\[
\per_p:\Pd\to\Q_p.
\]
However, $\per_p$ is typically not injective. For example, $F_p$ acts on the de Rham realization of the Tate motive by multiplication by $p^{-1}$, which is rational, so $\per_p$ fails to be injective if the Tate motive is in $\cM$. It can also be shown that $\per_p$ is not injective for the category of Artin motives.

In the case of mixed Tate motives, a modified version of the $p$-adic period conjecture is expected to hold. For mixed Tate motives, the de Rham Galois group decomposes as a semi-direct product
\[
G_{dR}=\G_m\ltimes U_{dR},
\]
where $U_{dR}$ is pro-unipotent. If we write $A_{dR}$ for the coordinate ring of $U_{dR}$, then there is a quotient map $\Pd\surj A_{dR}$. Although the $p$-adic period map does not factor through $A_{dR}$, there is a rescaled version $\varphi_p:\Pd\to\Q_p$ that does factor through $A_{dR}$, and it is conjectured (\cite{Yam10}, Conjecture 4) that $\varphi_p:A_{dR}\to\Q_p$ is injective for every $p$.

%
%
\section{Valuations of \uni{} periods}
\label{secval}
Let $X$ be a smooth projective variety over $\Z_p$. It is known that matrix coefficients for $F_{p}$ coming from the $n$-th level of the Hodge filtration on $H_{dR}(X)$ are divisible by $p^n$ if $\dim(X)<p$ (see \cite{Maz72}, p.\ 666). We formulate a version of this statement for \uni{} periods. Recall that there is a decreasing filtration $\fil^\bullet$ on $\A$ given by
\[
\fil^n\A:=\big\{(a_p)_p:\liminf_p v_p(a_p)\geq n\big\}.
\]
We also denote by $\fil^\bullet$ the Hodge filtration on $H_{dR}(X)$.

\begin{proposition}
\label{propval}
Suppose $X$ is smooth and projective over $\Q$. Then for all integers $n\geq 0$, the \uni{} Frobenius $F_{\A,X}$ takes $\fil^n H_{dR}(X)$ into $H_{dR}(X)\otimes\fil^n\A$.
\end{proposition}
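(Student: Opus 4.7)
The plan is to deduce the proposition from the theorem of Mazur cited in the excerpt (\cite{Maz72}), applied prime by prime to a spreading-out of $X$, and then to repackage the divisibility for all but finitely many $p$ in terms of the filtration $\fil^\bullet\A$ defined by $\liminf_p v_p$.

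First I would fix a smooth projective integral model $\widetilde{X}\to\sp\Z[1/N]$ of $X$ for some positive integer $N$. Choose a $\Q$-basis $e_1,\ldots,e_m$ of $H_{dR}(X)$ adapted to the Hodge filtration, so that for every $n$ the subset $\{e_i:i\leq k_n\}$ is a basis of $\fil^n H_{dR}(X)$. By enlarging $N$ if necessary, I can assume that the $\Z[1/N]$-module $H_{dR}(\widetilde{X}/\Z[1/N])$ is free with basis $e_1,\ldots,e_m$, and that this basis is compatible with the Hodge filtration on the integral de Rham cohomology (the Hodge filtration and its graded pieces are locally free, and compatibility of the spreading-out with a chosen basis is a generic condition).

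Now for every prime $p\nmid N$ with $p>\dim X$, Mazur's theorem (\cite{Maz72}, p.~666) says that the crystalline Frobenius $F_{p,X}$ carries $\fil^n H_{dR}(\widetilde{X}/\Z_p)$ into $p^n H_{dR}(\widetilde{X}/\Z_p)$. Expressing this in the chosen basis, each matrix entry of $F_{p,X}$ in the columns indexed by $\fil^n H_{dR}(X)$ lies in $p^n\Z_p$, and so has $p$-adic valuation at least $n$. This holds for all but finitely many $p$, so by the definition
\[
\fil^n\A=\bigl\{(a_p)_p:\liminf_p v_p(a_p)\geq n\bigr\},
\]
every such matrix entry represents an element of $\fil^n\A$. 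Hence $F_{\A,X}$ maps $\fil^n H_{dR}(X)$ into $H_{dR}(X)\otimes\fil^n\A$, as claimed.

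The only substantive point is step one, the spreading-out and the compatibility of the chosen $\Q$-basis with the integral Hodge filtration for almost all $p$; this is standard but is where the ``for all sufficiently large $p$'' hypothesis enters. Once that is set up, the rest is a direct invocation of Mazur's theorem followed by the definition of $\fil^\bullet\A$.
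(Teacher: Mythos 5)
Your proof is correct and follows essentially the same approach as the paper's: choose an integral model over $\Z[1/N]$, pick a $\Q$-basis of $H_{dR}(X)$ adapted to the Hodge filtration, enlarge $N$ so the basis is integrally compatible for large $p$, invoke Mazur's divisibility theorem prime by prime, and conclude from the definition of $\fil^n\A$. The only superficial difference is that the paper phrases the integral comparison via crystalline cohomology $H_{crys}(X_p;\Z_p)$ while you work directly with $H_{dR}(\widetilde{X}/\Z[1/N])$, but these are the same comparison.
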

\begin{proof}
Choose an integral model $\tilde{X}$ of $X$ over $\Z[1/N]$ for some positive integer $N$. For $p\nmid N$, let $X_p$ be the reduction of $\tilde{X}$ modulo $p$. There is a canonical isomorphism
\begin{equation}
\label{eqdrc}
H_{dR}(X)\otimes_\Q\Q_p\cong H_{crys}(X_p;\Z_p)\otimes_{\Z_p}\Q_p.
\end{equation}
Choose a basis $B$ for $H_{dR}(X)$ with the property that $B\cap\fil^nH_{dR}(X)$ is a basis for $\fil^n H_{dR}(X)$ for all $n$. There is an integer $M>N$ such that for all $p>M$, \eqref{eqdrc} takes $B$ to a $\Z_p$-basis of $H_{crys}(X_p;\Z_p)$. We choose $M$ large enough that $\fil^M H_{dR}(X)=0$. For $p>M$, it is known that the crystalline Frobenius on $H_{crys}(X_p;\Z_p)$ takes the $n$-th level of the Hodge filtration into $p^nH_{crys}(X_p;\Z_p)$. So for $p>M$, the matrix of $F_{p,X}$ with respect to our chosen basis has the property that columns corresponding to elements in $\fil^n H_{dR}(X)$ have each entry in $p^n\Z_p$. It follows that the corresponding columns in the matrix for $F_{\A,X}$ are in $\fil^n\A$, so that $F_{\A}$ takes $\fil^n H_{dR}(X)$ into $H_{dR}(X)\otimes\fil^n\A$.
\end{proof}

The strong form of our period conjecture is a kind of converse to Proposition \ref{propval}. Roughly speaking, the conjecture asserts that if a \uni{} periods is divisible by $p^n$ for all sufficiently large $p$, then that period is motivically equivalent something coming from the $n$-th level of the Hodge filtration.

To formulate the conjecture precisely, we fix a category $\cM$ of motives. The Hodge filtration on $\omega_{dR}$ induces a filtration on $\Pd$. If $F_{p,M}$ takes $\fil^n M_{dR}$ into $M_{dR}\otimes\fil^n\A$ for all $M\in\cM$ and $n\in\Z$ (Proposition \ref{propval} suggests we should expect this to be the case), then $\per_\A$ takes $\fil^n\Pd(M)$ into $\fil^n\A$. We expect that $\per_\A$ is compatible with the filtrations in a stronger sense. The strong form of our period conjecture is the following assertion about $\cM$.

\begin{repconj}{conups}[Strong \uni{} period conjecture]
For all integers $n$, we have
\[
\per_{\A}^{-1}\big(\fil^n\A\big)=\fil^n\Pd.
\]

\end{repconj}
Conjecture \ref{conups} is equivalent to the statement that $\per_{\A}$ induces an isomorphism of filtered algebras $\Pd\cong \cP_{\A}(\cM)$. Conjecture \ref{conups} implies Conjecture \ref{conup} because the filtration on $\Pd$ is separated. In some cases, it is possible to show directly that $\per_\A(\fil^n\Pd)\subset\fil^n\A$. This is the case for mixed Tate motives over $\Z$ (see Sec.\ \ref{secMTZ}). The hard part of the conjecture is the converse: if $x\in\Pd$ satisfies $\per(x)\in\fil^n\A$, then $x\in\fil^n\Pd$.

%
%
\section{Consequences of the period conjectures}
\label{secpcc}
In this section, we describe several consequences of Conjectures \ref{conup} and \ref{conups} applied to various categories of motives.

\subsection{Wieferich primes} 
\label{sswie}
Fix a rational number $r\neq0,\pm1$. Fermat's Little Theorem asserts that, for all primes $p$ not dividing the numerator or denominator of $r$,
\begin{equation}
\label{eqwie}
r^{p-1}\equiv 1\mod p.
\end{equation}
A \emph{Wieferich prime to the base $r$} is a prime which \eqref{eqwie} holds modulo $p^2$. A heuristic argument suggests that, for each $r$, the set of Wieferich primes is infinite but has density $0$. Little is known about the Wieferich primes, and it is an open problem to produce an $r$ for which there are infinitely many non-Wieferich primes. It is known \cite{Sil88} that the truth of the ABC-conjecture would imply there are infinitely many non-Wieferich primes to the base $2$.

\begin{theorem}
\label{thnwi}
If Conjecture \ref{conups} holds for the category of mixed Tate motives over $\Q$, then for every rational $r\neq 0,\pm1$, there exist infinitely many non-Wieferich primes to the base $r$.
\end{theorem}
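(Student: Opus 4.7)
The plan is to apply Conjecture \ref{conups} to the motivic logarithm $\log^{\dr}(r) \in \Pd$ attached to the Kummer motive $K_r \in \MT(\Q)$, which fits in an exact sequence $0 \to \Q(1) \to K_r \to \Q(0) \to 0$. Fix a basis $\{e_0, e_{-1}\}$ of $K_{r,\dr}$ with $e_{-1}$ spanning $\Q(1)_{\dr}$ and $e_0$ spanning $\fil^0 K_{r,\dr}$, and let $\log^{\dr}(r) \in \Pd$ denote the matrix coefficient $g \mapsto \langle g\cdot e_0,\, e_{-1}^\vee\rangle$ on $G_{\dr}$. The standard crystalline Frobenius computation for a Kummer motive yields $F_p(e_0) = e_0 + c_p\, e_{-1}$ with $c_p \in \Q_p$ a unit multiple of $\log_p(r^{p-1})$; hence $\per_\A(\log^{\dr}(r))$ and $(\log_p(r^{p-1}))_p$ differ by an element of $\A^\times$ and lie in the same pieces $\fil^n \A$.

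I would then translate the Wieferich condition into a $p$-adic valuation statement. For $p \geq 3$ and $v_p(x-1) \geq 1$, every term $(x-1)^n/n$ with $n \geq 2$ of the $p$-adic log series has $v_p$ strictly greater than the $n=1$ term, giving $v_p(\log_p(x)) = v_p(x-1)$. Applied with $x = r^{p-1}$, and using Fermat's Little Theorem $v_p(r^{p-1}-1) \geq 1$, the prime $p$ is non-Wieferich to base $r$ precisely when $v_p(\log_p(r^{p-1})) = 1$. Consequently the hypothesis ``only finitely many non-Wieferich primes to base $r$'' is equivalent to $(\log_p(r^{p-1}))_p \in \fil^2 \A$, i.e.\ $\per_\A(\log^{\dr}(r)) \in \fil^2 \A$.

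Arguing by contradiction, assume only finitely many non-Wieferich primes exist. Conjecture \ref{conups} then forces $\log^{\dr}(r) \in \fil^2 \Pd$. To contradict this, I would show $\log^{\dr}(r) \in \fil^1 \Pd \setminus \fil^2 \Pd$ for every $r \neq \pm 1$. Membership in $\fil^1$ is direct: $e_0 \in \fil^0 K_{r,\dr}$ and $e_{-1}^\vee \in \fil^1 K_{r,\dr}^\vee$ (since $e_{-1}$ spans $\Q(1)_{\dr}$, concentrated in Hodge degree $-1$), so the matrix coefficient has Hodge-filtered degree $0+1=1$. The nonvanishing modulo $\fil^2$ uses Beilinson's computation $\mathrm{Ext}^1_{\MT(\Q)}(\Q(0), \Q(1)) \cong K_1(\Q) \otimes \Q \cong \Q^*\otimes\Q$: the extension class of $K_r$ is $r\otimes 1$, nonzero for $r \neq \pm 1$, and this class is identified (via the natural injection $\mathrm{Ext}^1_{\MT(\Q)}(\Q(0),\Q(1)) \hookrightarrow \mathrm{gr}^1_\fil \Pd$) with $\log^{\dr}(r) \bmod \fil^2 \Pd$.

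The main obstacle is the last point: establishing the injectivity of the natural map $\mathrm{Ext}^1_{\MT(\Q)}(\Q(0),\Q(1)) \hookrightarrow \mathrm{gr}^1_\fil \Pd$ and identifying the image of $[K_r]$ with the class of $\log^{\dr}(r)$. This is a structural calculation inside the Hopf algebra $\Pd$ for mixed Tate motives and does not invoke the period conjecture, but it requires careful tracking of the Hodge filtration on $\Pd$ at its weight-$2$ graded piece.
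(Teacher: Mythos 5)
The approach matches the paper's closely: a Kummer motive, the translation of the Wieferich condition into the $p$-adic valuation of $\log_p(r^{p-1})$, and an application of Conjecture \ref{conups} to conclude the corresponding de Rham period cannot lie in the next filtration step. The paper works with the exact sequence $0\to\Q(0)\to K(r)\to\Q(-1)\to 0$, you work with the twist $0\to\Q(1)\to K_r\to\Q(0)\to 0$; that difference alone is harmless.

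There is, however, a pair of compensating slips you should be aware of. With your normalization, the off-diagonal entry of $F_{p}$ is not a unit multiple of $\log_p(r^{p-1})$ but rather $p^{-1}\log_p(r^{p-1})$ (twisting the paper's matrix $\bigl(\begin{smallmatrix}1&\log_p(r^{p-1})\\0&p\end{smallmatrix}\bigr)$ by $\Q(1)$ introduces a global $p^{-1}$); so $\per_\A(\log^{\dr}(r))$ differs from $(\log_p(r^{p-1}))_p$ by $\p^{-1}$, which is invertible in $\A$ but shifts the filtration by one. Correspondingly, your computation of the filtration degree of $\log^{\dr}(r)$ as ``$0+1=1$'' (adding the filtration degree of $e_0$ and that of $e_{-1}^\vee$) is not the filtration convention the paper uses: for the paper the weight of a matrix coefficient $\langle g\omega,\eta\rangle$ is governed by $\omega$ alone (this is forced by $\Ld$ having weight $1$, since there the dual vector contributes Hodge degree $-1$, which would give total degree $0$ under the additive convention). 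With the paper's convention your $\log^{\dr}(r)$ lies in $\fil^0\Pd\setminus\fil^1\Pd$, not $\fil^1\setminus\fil^2$, and correspondingly ``finitely many non-Wieferich primes'' pushes $\per_\A(\log^{\dr}(r))$ into $\fil^1\A$, not $\fil^2\A$. The two errors cancel and the contradiction still goes through after this shift, so the argument is structurally correct, but as written the intermediate claims are off by one.

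Your method for the key nonvanishing step---showing $\log^{\dr}(r)$ does not fall into the next filtration level by identifying its class in the graded piece with the nonzero extension class $[K_r]\in\Ext^1_{\MT(\Q)}(\Q(0),\Q(1))\cong\Q^\times\otimes\Q$---is a valid and more explicit route than the paper's terse ``Since $\fil^2 K(r)_{dR}=0$'' remark. Both amount to the fact that for mixed Tate motives $\Pd$ is graded by weight, the period is a nonzero homogeneous element, and hence it cannot lie in the span of strictly higher-weight elements. Your version makes the underlying structure explicit; the paper's version relies implicitly on that grading.
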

\begin{proof}
For $r\in\Q\backslash\{0,1\}$, there is a Kummer motive $K(r)\in \MT(\Q)$ sitting in a short exact sequence
\[
0\to\Q(0)\to K(r)\to\Q(-1)\to 0.
\]
It follow from \cite{Del89}, \S2.9 that with respect to an appropriate basis, the matrix for $F_p$ on $K(r)_{dR}\otimes\Q_p$ is given by
\[
\ltwomat
1&\log_p(r^{p-1})\\
0&p
\rtwomat,
\]
where $\log_p$ is the $p$-adic logarithm. It is not hard to check that $\log_p(r^{p-1})$ is a non-zero element of $p\Z_p$ for all primes $p$ at which $r$ is a unit, and for these $p$, $v_p(\log_p(r^{p-1}))\geq 2$ precisely when $p$ is a Wieferich prime to the base $r$. Since $\fil^2 K(r)_{dR}=0$, Conjecture \ref{conups} implies that every non-zero \uni{} period of $K(r)$ has valuation exactly $1$ for infinitely many $p$. It follows that Conjecture \ref{conups} implies that there are infinitely many non-Wieferich primes to the base $r$.
\end{proof}

\subsubsection{Fermat's Last Theorem} In 1909, Wieferich proved that if the first case of Fermat's Last Theorem fails for a prime $p$, then $p$ must be a Wieferich prime to the base $2$. So Conjecture \ref{conups} would imply that the first case of Fermat's Last Theorem is true for infinitely many $p$. By comparison, the first proof that the first case of Fermat's Last Theorem is true for infinitely many $p$ appeared in 1985 \cite{Adl85}, so this gives a lower bound on the difficulty of proving Conjecture \ref{conups} for $\MT(\Q)$.

\subsection{Bernoulli numbers}
\label{sswo}

The Bernoulli numbers $B_n\in\Q$ are a sequence of rational numbers defined by
\[
\frac{x}{e^x-1}=\sum_{n\geq 0}B_n\frac{x^n}{n!}.
\]
While the denominator of $B_n$ is the product of those primes $p$ for which $p-1|n$, the numerators are much more mysterious. For $p$ prime, the Herbrand-Ribet theorem gives a connection between the set of Bernoulli numbers with numerator divisible by $p$ and the class group of the cyclotomic field $\Q(\zeta_p)$ (see \cite{Her32} and \cite{Rib76}).

Conjecture \ref{conups} applied to the category of mixed Tate motives over $\Z$ has the following consequence for the Bernoulli numbers.

\begin{theorem}
\label{thnwo}
If Conjecture \ref{conups} holds for the category of mixed Tate motives over $\Z$, then for every odd $k\geq 3$, there exist infinitely primes $p$ for which $p\nmid B_{p-k}$.
\end{theorem}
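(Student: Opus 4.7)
The plan is to mirror the proof of Theorem \ref{thnwi}, replacing the Kummer motive by the mixed Tate motive encoding $\zeta(k)$. For each odd $k\geq 3$, the one-dimensional $\Q$-vector space $\mathrm{Ext}^1_{\MT(\Z)}(\Q(0),\Q(k))$ is generated by the class $\zeta^{\fm}(k)$; after a Tate twist this yields an extension
\[
0\to\Q(0)\to E_k\to\Q(-k)\to 0
\]
in $\MT(\Z)$, with a basis $e_1,e_2$ of $E_{k,dR}$ such that $e_1$ spans $\Q(0)_{dR}$ and $e_2$ lifts a generator of $\Q(-k)_{dR}$. The Hodge filtration is $\fil^0 E_{k,dR}=E_{k,dR}$, $\fil^n E_{k,dR}=\Q e_2$ for $1\leq n\leq k$, and $\fil^{k+1}E_{k,dR}=0$, while the crystalline Frobenius takes the form
\[
F_p=\begin{pmatrix}1 & c_p\\ 0 & p^k\end{pmatrix},
\]
with $c_p\in\Q_p$ the $p$-adic realization of $\zeta^{\fm}(k)$ (well-defined up to a fixed nonzero rational scalar).

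Next I would run the Hodge-filtration calculation verbatim from Theorem \ref{thnwi}. The matrix coefficient $\phi_{12}=e_1^{*}\otimes e_2\in\Pd(E_k)$ satisfies $\per_{\A}(\phi_{12})=(c_p)_p$. Because $e_1^{*}\in\fil^0 E_{k,dR}^{\vee}$ and $e_2\in\fil^k E_{k,dR}$, one has $\phi_{12}\in\fil^k\Pd(E_k)$; moreover the tensor-filtration computation using $\fil^{k+1}E_{k,dR}=0$ and $\fil^1 E_{k,dR}^{\vee}=(\fil^0 E_{k,dR})^{\perp}=0$ shows $\fil^{k+1}\Pd(E_k)=0$. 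Since $E_k$ is a non-split extension, $\phi_{12}\neq 0$ in $\Pd$, so applying both directions of Conjecture \ref{conups} forces $\liminf_p v_p(c_p)=k$. In particular, $v_p(c_p)=k$ for infinitely many primes $p$.

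The final step translates $v_p(c_p)=k$ into $p\nmid B_{p-k}$. Write $c_p=p^k u_p$ with $u_p\in\Z_p$; by the identification of the crystalline Frobenius on the Beilinson--Soul\'e extension with the Kubota--Leopoldt $p$-adic zeta function (via the Coates--Wiles explicit reciprocity law, or equivalently the $p$-adic Soul\'e regulator on cyclotomic units), there is a nonzero rational $\alpha_k$, depending only on $k$, with $u_p\equiv\alpha_k\,\zeta_p(k)\pmod p$ for all sufficiently large $p$. Combining the Kummer congruence $\zeta_p(k)\equiv\zeta_p(1-(p-k))\pmod p$ with the Kubota--Leopoldt formula $\zeta_p(1-n)=-(1-p^{n-1})B_n/n$ yields $\zeta_p(k)\equiv B_{p-k}/k\pmod p$ for $p$ large, so $v_p(u_p)=0$ is equivalent to $p\nmid B_{p-k}$; the infinitude from the previous step then gives the theorem. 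The main obstacle lies precisely here: while the Hodge-filtration machinery transports verbatim from Theorem \ref{thnwi}, pinning down the comparison $u_p\equiv\alpha_k\,\zeta_p(k)\pmod p$ requires invoking a precise formula for the $p$-adic realization of $\zeta^{\fm}(k)$ in $\MT(\Z)$ from the literature on $p$-adic mixed Tate motives.
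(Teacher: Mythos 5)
Your proof is correct and takes essentially the same approach as the paper: the paper works directly with the de Rham MZV $\Zd(k)\in\fil^k\Pd\setminus\fil^{k+1}\Pd$ and the congruence $\zeta_p(k)\equiv p^kB_{p-k}/k\pmod{p^{k+1}}$, while you reconstruct these facts from the Soul\'e extension $E_k$, its Hodge filtration, and the Kubota--Leopoldt interpolation --- but the period whose valuation you pin down, and the way Conjecture~\ref{conups} is applied, are identical. (Minor caveat: in your last paragraph you use $\zeta_p(k)$ both for the crystalline $p$-adic zeta value satisfying $v_p\geq k$ and for the Kubota--Leopoldt value that is a $p$-unit; the congruence $u_p\equiv\alpha_k\zeta_p(k)\pmod p$ only makes sense with the latter normalization.)
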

\begin{proof}
Let $\Pd$ be the ring of de Rham periods of $\MT(\Z)$, which is described in Sec.\ \ref{ssdr}. There is an element $\Zd(k)\in\fil^k\Pd\backslash\fil^{k+1}\Pd$, with the property that
\[
\per_p(\Zd(k))=\zeta_p(k)\equiv p^k\frac{B_{p-k}}{k}\mod p^{k+1}.
\]
Conjecture \ref{conups} for $\MT(\Z)$ then implies $\big(p^kB_{p-k}/k\big)\not\in\fil^{k+1}\A$, which is the statement that there are infinitely many $p$ not dividing $B_{p-k}$.
\end{proof}

\noindent Theorem \ref{thnwo} can be generalized considerably.
\begin{theorem}
\label{thgen}
Assume the truth of Conjecture \ref{conups} for $\MT(\Z)$, and let $f\in\Q[x_1,\ldots,x_n]$ be a non-zero polynomial. Then there are infinitely many primes $p$ for which $p$ does not divide the numerator of
\[
f\big(B_{p-3},B_{p-5},\ldots,B_{p-2n-1}\big).
\]
\end{theorem}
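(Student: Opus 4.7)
The plan is to construct, for each nonzero polynomial $f$, a single motivic element $\xi \in \Pd$ lying in $\fil^W \Pd \setminus \fil^{W+1} \Pd$ with the property that $\per_p(\xi) \equiv p^W f(B_{p-3},\ldots,B_{p-(2n+1)}) \pmod{p^{W+1}}$ for all sufficiently large $p$. Granting such a $\xi$, Conjecture \ref{conups} will force $\per_\A(\xi) \notin \fil^{W+1} \A$, so $v_p(\per_p(\xi)) = W$ for infinitely many $p$; by the congruence, this immediately yields $p \nmid f(B_{p-3},\ldots,B_{p-(2n+1)})$ for infinitely many $p$, which is the desired conclusion. The construction itself is a direct weighted generalisation of the one-variable case of Theorem \ref{thnwo}.

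Writing $f = \sum_I c_I x^I$ in multi-index notation, setting $w(I) := \sum_{j=1}^n (2j+1) i_j$, and fixing any integer $W \geq \max\{w(I) : c_I \neq 0\}$, I would take
\[
\xi := \sum_I c_I \, \Ld^{W - w(I)} \prod_{j=1}^n \big((2j+1)\,\Zd(2j+1)\big)^{i_j} \;\in\; \Pd,
\]
where $\Ld \in \fil^1 \Pd$ is the motivic Lefschetz element with $\per_\A(\Ld) = \p$; the powers of $\Ld$ serve purely to equalise Hodge weights so that each summand lies in $\fil^W \Pd$. Combining $\per_p(\Ld) = p$ with the Kummer-style congruence $(2j+1)\,\per_p(\Zd(2j+1)) \equiv p^{2j+1} B_{p-(2j+1)} \pmod{p^{2j+2}}$ recalled in the proof of Theorem \ref{thnwo}, a routine multiplicativity calculation yields
\[
\per_p(\xi) \equiv p^W \, f\!\left(B_{p-3}, \ldots, B_{p-(2n+1)}\right) \pmod{p^{W+1}},
\]
with the rescaling factors $(2j+1)^{i_j}$ built into $\xi$ precisely so that no residual constants survive on the right.

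The main obstacle is verifying $\xi \notin \fil^{W+1}\Pd$ when $f \neq 0$. The approach is to pass to the associated graded $\mathrm{gr}^\bullet_\fil \Pd$: by the structure of the de Rham periods of $\MT(\Z)$ described in Sec.\ \ref{ssdr} (ultimately a consequence of Brown's theorem on the pro-unipotent de Rham Galois group of $\MT(\Z)$), the classes of $\Ld$ and of the $\Zd(2k+1)$ extend to a system of algebraically independent polynomial generators of $\mathrm{gr}^\bullet_\fil \Pd$ in degrees $1, 3, 5, \ldots$. The distinct monomials $\Ld^{W - w(I)} \prod_j \Zd(2j+1)^{i_j}$ appearing in $\xi$ therefore reduce to linearly independent (up to nonzero scalars) elements of $\mathrm{gr}^W \Pd$, so the image of $\xi$ in $\mathrm{gr}^W \Pd$ is a nonzero $\Q$-linear combination and $\xi \notin \fil^{W+1}\Pd$ as required. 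This structural input on $\Pd$ is the one nontrivial ingredient beyond Theorem \ref{thnwo}; modulo a standard verification that the Hodge filtration restricted to the polynomial subalgebra on $\Ld$ and the $\Zd(2k+1)$ coincides with the filtration by weighted degree, the rest of the argument is purely formal.
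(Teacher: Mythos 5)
Your proof is correct and rests on exactly the two ingredients the paper uses: the algebraic independence of $\Ld$ and the $\Zd(2k+1)$ (equivalently, of the weight-zero elements $n(\Ld)^{-n}\Zd(n)$) in the graded ring $\Pd$, and the Kummer-type congruence $n\,p^{-n}\zeta_p(n)\equiv B_{p-n}\pmod p$. The paper reaches the conclusion more compactly by invoking Theorem \ref{thfups} to restate Conjecture \ref{conups} as injectivity of $\per_\cA:\Pd_0\to\cA$, so that algebraic independence of the normalized classes in $\Pd_0$ transports directly to algebraic independence of the sequences $\big(B_{p-n}\bmod p\big)_p$ in $\cA$, with no need to choose a common weight $W$, build a homogeneous lift $\xi$, or track valuations in $\A$.
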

\noindent The proof is given in Sec.\ \ref{ssfmzv}.

\subsection{Fullness of the Ogus realization}

The following definition is a \uni{} analogue of a definition in \cite{Maz72}.
\begin{definition}
A \emph{\uni{} span} is a triple $(V,W,F)$, where $V$ and $W$ are finite-dimensional vector spaces over $\Q$ and $F:V\otimes\A\iso W\otimes\A$ is an $\A$-linear isomorphism. The collection of all \uni{} spans forms a neutral Tannakian category, which is denoted $\spa$
\end{definition}
There is a functor $\og\to\spa$, $(V,F)\mapsto (V,V,F)$ that ``forgets'' that the two vector spaces are the same. 

\begin{theorem}
\label{thfull}
Conjecture \ref{conup} holds for $\cM$ if and only if the functor $\cM\to\spa$, $M\mapsto (M_{dR},M_{dR},F_{\A,M})$ is full.
\end{theorem}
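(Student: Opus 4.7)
The plan is to translate morphisms in $\spa$ into a statement about the action of $G_{dR}$ on appropriate representations, then handle each direction via the Tannakian formalism.

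First I set up the key identification. Let $V := \mathrm{Hom}_\Q(M_{dR}, N_{dR})$, viewed as a $G_{dR}$-representation under $g \cdot f := g_N \circ f \circ g_M^{-1}$. A $\spa$-morphism $F(M) \to F(N)$ is uniquely determined by its first component $f_1 \in V$: the condition $F_{\A,N} \circ (f_1 \otimes 1) = (f_2 \otimes 1) \circ F_{\A,M}$ is equivalent to $f_2 = F_\A \cdot f_1$ in $V \otimes \A$, so specifying $f_1 \in V$ with $F_\A \cdot f_1 \in V \subset V \otimes \A$ forces and determines $f_2$. Motive morphisms $\mathrm{Hom}_\cM(M,N) \iso V^{G_{dR}}$ embed via $f \mapsto (f,f)$, so fullness of the functor reduces to the statement: for every $f_1 \in V$ with $F_\A \cdot f_1 \in V$, one has $f_1 \in V^{G_{dR}}$ (and then $F_\A \cdot f_1 = f_1$ automatically).

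For the forward direction, assume Conjecture \ref{conup} and take $f_1 \in V$ with $F_\A \cdot f_1 = f_2 \in V$. The orbit morphism $\phi_{f_1}: G_{dR} \to V$, $g \mapsto g \cdot f_1$, corresponds dually to a $\Q$-linear map $V^\vee \to \Pd$, $\psi \mapsto q_\psi$, with $q_\psi(g) := \psi(g \cdot f_1)$. Evaluating at $g = F_\A$ gives $\per_\A(q_\psi) = \psi(f_2) \in \Q \subset \A$, so $q_\psi - \psi(f_2) \cdot 1$ lies in $\ker(\per_\A) = 0$, whence $q_\psi = \psi(f_2) \cdot 1$ is constant on $G_{dR}$. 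Since this holds for every $\psi \in V^\vee$, the orbit map is the constant map with value $f_2$. Evaluating at the identity gives $f_1 = f_2$, and $g \cdot f_1 = f_1$ for every $g \in G_{dR}$ shows $f_1 \in V^{G_{dR}}$; by Tannakian reconstruction this lifts to a morphism in $\cM$.

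For the reverse direction I argue by contraposition: assuming $\per_\A$ has a nonzero kernel, I exhibit a $\spa$-morphism not in the image of $\cM$. Given nonzero $p \in \ker(\per_\A)$, since $\Pd$ is the union of its finite-dimensional $G_{dR}$-subcomodules, $p$ lies in $P_{dR}$ for some $P \in \cM$. The aim is to produce $f \in P_{dR}$ (viewed inside $\mathrm{Hom}_\Q(\one_{dR}, P_{dR})$) so that $(f, F_{\A,P}(f))$ is a $\spa$-morphism $F(\one) \to F(P)$ while $f \notin P_{dR}^{G_{dR}}$. The clean model case is when $p$ is primitive in the Hopf algebra $\Pd$, meaning $\Delta(p) = p \otimes 1 + 1 \otimes p$: take $P_{dR} = \Q \cdot 1 \oplus \Q \cdot p$ and $f = p$, and Sweedler's formula gives $F_{\A,P}(p) = p \cdot \per_\A(1) + 1 \cdot \per_\A(p) = p \in P_{dR}$, while $p$ is non-invariant since the $G_{dR}$-invariants of $\Pd$ under either regular action are just the constants. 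The main obstacle is the general case, where $p$ need not be primitive and one must produce such an $f$ using a more intricate decomposition of $p$ inside $\Pd$ (for instance via the coradical filtration), ensuring both rationality of $F_{\A,P}(f)$ and non-invariance of $f$ simultaneously.
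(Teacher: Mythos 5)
The forward direction of your argument is correct and is a clean direct derivation: from $f_1\in V$ with $F_\A\cdot f_1=f_2\in V$, the orbit morphism $G_{dR}\to V$ dualizes to $V^\vee\to\Pd$, $\psi\mapsto q_\psi$, and injectivity of $\per_\A$ forces $q_\psi$ constant for every $\psi$, hence $f_1=f_2\in V^{G_{dR}}$. This is exactly the kind of Tannakian argument one needs, and although the paper itself gives no details (it only cites the Betti--de Rham analogue in Huber--M\"uller-Stach, Proposition 13.2.8, and says ``the same proof works here''), your argument is a legitimate elaboration of that direction.

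The reverse direction, however, has a genuine gap, which you acknowledge. The issue is not merely technical: for a general nonzero $p\in\ker(\per_\A)$, the element $F_{\A,P}(p)=(\mathrm{id}\otimes\per_\A)(\Delta p)=\sum p_{(1)}\per_\A(p_{(2)})$ will typically land in $P_{dR}\otimes\A$ but not in $P_{dR}\otimes\Q$, because the Sweedler components $p_{(2)}$ need not lie in $\ker(\per_\A)$. So taking $f=p$ does not produce a $\spa$-morphism, and it is not clear that passing to the coradical filtration fixes this, since the failure of rationality involves the values $\per_\A(p_{(2)})$ for components unrelated to $p$, not just the coalgebra structure of $p$ itself. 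What you would actually need is a nonzero finite-dimensional subcomodule $C\subset\Pd$ and a noninvariant $v\in C$ with $(\mathrm{id}\otimes\per_\A)(\Delta v)$ rational, and producing such a $v$ from an arbitrary $p\in\ker(\per_\A)$ requires a different mechanism than the one you sketch. Indeed, a toy computation with $G_{dR}=\G_a$ and an $\A$-point $g$ that is algebraic of degree $\geq 2$ over $\Q$ but not in $\Q$ (such elements exist in $\A$ because $\A$ has nontrivial idempotents) shows that the naive implication ``fullness $\Rightarrow$ $\per_\A$ injective'' needs the specific structure of a category of motives rather than being a purely formal statement about an affine group scheme with a chosen $\A$-point; whatever form the correct argument takes, it cannot be as soft as the primitive-element case you worked out. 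So your proposal establishes one implication cleanly but does not establish the other, and it does not match the paper's route, which sidesteps the whole question by appealing to the Betti--de Rham analogue.
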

\begin{proof} 
The Betti-de Rham category over $\Q$ is the category whose objects are triples $(V,W,c)$, where $V$ and $W$ are finite-dimensional vector spaces over $\Q$ and
\[
c:V\otimes\C\iso W\otimes\C.
\]
It is well-known (see e.g.\ \cite{Hub17}, Proposition 13.2.8) that Conjecture \ref{conper} is equivalent to the assertion that the functor from $\cM$ to the Betti-de Rham category is full. The same proof works here.
\end{proof}

\begin{corollary}
\label{thog}
Conjecture \ref{conup} implies the Ogus realization is full.
\end{corollary}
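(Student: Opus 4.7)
The plan is to reduce the fullness of the Ogus realization $\cM \to \og$ directly to the fullness of the functor $\cM \to \spa$ supplied by Theorem \ref{thfull}. The point is that $\og$ embeds into $\spa$ via $(V,F) \mapsto (V,V,F)$ on objects and $f \mapsto (f,f)$ on morphisms, and this embedding is compatible with the two realization functors out of $\cM$.

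Concretely, first I would invoke Theorem \ref{thfull}: under Conjecture \ref{conup}, the functor $\cM \to \spa$, $M \mapsto (M_{dR}, M_{dR}, F_{\A,M})$, is full. Next, fix objects $M, N \in \cM$ and let $f \colon (M_{dR}, F_{\A,M}) \to (N_{dR}, F_{\A,N})$ be any morphism in $\og$, i.e.\ a $\Q$-linear map $f \colon M_{dR} \to N_{dR}$ satisfying $F_{\A,N} \circ (f \otimes \A) = (f \otimes \A) \circ F_{\A,M}$. Then the pair $(f,f)$ automatically satisfies the defining compatibility of a morphism in $\spa$, so $(f,f) \colon (M_{dR}, M_{dR}, F_{\A,M}) \to (N_{dR}, N_{dR}, F_{\A,N})$ is a morphism in $\spa$.

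By fullness of $\cM \to \spa$, there exists $\varphi \in \hom_\cM(M,N)$ whose image in $\spa$ is $(f,f)$; but by the definition of the functor to $\spa$, this image is $(\varphi_{dR}, \varphi_{dR})$. Hence $\varphi_{dR} = f$, so $\varphi$ is a preimage of $f$ under the Ogus realization, proving fullness. There is no real obstacle here beyond bookkeeping: once Theorem \ref{thfull} is in hand, the corollary is a formal consequence of the fact that morphisms in $\og$ correspond exactly to the "diagonal" morphisms in $\spa$ between objects in the image of the Ogus realization.
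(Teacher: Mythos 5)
Your proof is correct and is exactly the paper's argument, just unrolled: the paper states the corollary in one line (``Fullness of $\cM\to\spa$ implies fullness of $\cM\to\og$ because $\og\to\spa$ is faithful''), and your explicit element-chase is precisely the standard proof of that categorical fact applied here. No substantive difference.
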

\begin{proof}
Fullness of $\cM\to\spa$ implies fullness of $\cM\to\og$ because $\og\to\spa$ is faithful.
\end{proof}

\begin{remark}
Conjecture \ref{conup} is a priori stronger than the fullness of the Ogus realization. Conjecture \ref{conup} is equivalent to the statement that the crystalline Frobenius elements of $G_{dR}$ are Zariski dense, whereas the fullness of the Ogus realization is equivalent to the statement that the crystalline Frobenius elements generate a Zariski dense subgroup of $G_{dR}$.
\end{remark}

\subsection{Existence of algebraic cycles}

Here we describe a consequence of Conjecture \ref{conup} for algebraic cycles. We will need to assume the standard conjectures on algebraic cycles. Specifically, we assume that  for every smooth projective variety over $\Q$,
\begin{enumerate}
\item The K\"unneth projectors are algebraic.
\item Numerical equivalence of cycles equals homological equivalence.
\end{enumerate}
It is shown in \cite{Jan92} (Corollary 2) that condition (1) above implies that the category of pure numerical motives over $\Q$ (with coefficients in $\Q$) is neutral Tannakian. Condition (2) implies that $\omega_{dR}$ and $\omega_B$ are fibre functors, and the de Rham-Betti comparison and crystalline Frobenius map come from the corresponding operations on cohomology.
\begin{theorem}
\label{thcyc}
Assume that the assertions (1) and (2) above hold, and let $\cM$ be the category of pure numerical motives over $\Q$. The truth of Conjecture  \ref{conup} for $\cM$ is equivalent to the assertion that, whenever $X$ is a smooth projective variety, $n\in\Z_{\geq 0}$, and $\alpha$, $\beta\in H_{dR}^{2n}(X)(n)$ satisfy $F_{p,X}(\alpha)= \beta$ for all but finitely many $p$, then $\alpha$ is algebraic (so in fact $\alpha=\beta$).
\end{theorem}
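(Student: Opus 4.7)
The plan is to apply Theorem~\ref{thfull}, which identifies Conjecture~\ref{conup} for $\cM$ with the fullness of the functor $F\colon\cM\to\spa$, $M\mapsto(M_{dR},M_{dR},F_{\A,M})$. The key structural point is that the image $F(f)=(f_{dR},f_{dR})$ of a motivic morphism is always a \emph{diagonal} pair. Under~(1) and~(2), the category $\cM$ of pure numerical motives is Tannakian by \cite{Jan92}, and $\hom_\cM(\one,h^{2n}(X)(n))$ is identified with the group of codimension-$n$ algebraic cycles on $X$ modulo numerical (equivalently, by~(2), homological) equivalence.

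For the forward direction, given $\alpha,\beta\in H^{2n}_{dR}(X)(n)$ satisfying $F_{p,X}(\alpha)=\beta$ for almost all $p$, the pair $(\alpha,\beta)$ defines a morphism $\one\to h^{2n}(X)(n)$ in $\spa$. By fullness it lifts to some $f\in\hom_\cM(\one,h^{2n}(X)(n))$, and since $F(f)$ must be diagonal, $\alpha=f_{dR}(1)=\beta$, which is algebraic by the identification above.

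For the backward direction, we prove $F$ is full. By the standard Tannakian reduction $\hom(M,N)\cong\hom(\one,M^\vee\otimes N)$, it suffices to show $\hom_\cM(\one,M)\surj\hom_\spa(\one,F(M))$ for every $M\in\cM$. Using the algebraic K\"unneth projectors from~(1), split $M=\bigoplus_w M_w$ into pure weight pieces; Frobenius respects this splitting, so $\hom_\spa(\one,F(M))=\bigoplus_w\hom_\spa(\one,F(M_w))$. For $w=0$, a pure weight-zero motive $M_0$ is a direct summand of some $h^{2n}(X)(n)$ via motivic maps $M_0\xrightarrow{i}h^{2n}(X)(n)\xrightarrow{\pi}M_0$ with $\pi i=\id$; given $(\alpha,\beta)\in\hom_\spa(\one,F(M_0))$, the pushforward $(i_{dR}\alpha,i_{dR}\beta)$ falls under the cycle hypothesis, yielding $g\in\hom_\cM(\one,h^{2n}(X)(n))$ with $g_{dR}(1)=i_{dR}\alpha=i_{dR}\beta$, so $\pi\circ g$ lifts $(\alpha,\beta)$ to $\cM$.

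The main obstacle is the weight $w\neq 0$ case, where one must show $\hom_\spa(\one,F(M_w))=0$. The natural tool is a polarization $\pi_w\colon M_w\otimes M_w\to\one(-w)$ (from a projective embedding), which is Frobenius-equivariant in the sense $\pi_w(F_p x,F_p y)=p^w\pi_w(x,y)$. Setting $y=\alpha$ gives the identity $\pi_w(\beta,\beta)=p^w\pi_w(\alpha,\alpha)$ among rationals valid for all large $p$, forcing $\pi_w(\alpha,\alpha)=\pi_w(\beta,\beta)=0$; letting $y$ vary yields the stronger relation $F_p^\vee\phi_\beta=p^w\phi_\alpha$ in $M_w^\vee\otimes\Q_p$, where $\phi_v=\pi_w(v,\cdot)$. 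Combining this with the Weil bound $|\lambda|_\C=p^{w/2}$ on Frobenius eigenvalues and the Newton-above-Hodge estimates of Proposition~\ref{propval} should force $\phi_\alpha=\phi_\beta=0$, and hence $\alpha=\beta=0$ by nondegeneracy of the polarization. Making this mixed archimedean/$p$-adic argument rigorous is the technical heart of the proof; every other step is formal Tannakian manipulation.
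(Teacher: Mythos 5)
Your proposal follows the same high-level strategy as the paper: invoke Theorem~\ref{thfull} to identify Conjecture~\ref{conup} with fullness of the span realization, then reduce fullness to surjectivity of $\hom_\cM(\one,M)\to\hom_\spa(\one,F(M))$ via the internal-Hom trick. The forward direction is essentially identical to the paper's, and your observation that $F(f)$ is always a diagonal pair is a clean way to see why fullness forces $\alpha=\beta$.

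The backward direction differs in organization. The paper reduces immediately to $M$ of the form $H(X)(n)$, identifies $\hom_\spa(\one,Sp(H(X)(n)))$ with pairs $(\alpha,\beta)\in H_{dR}(X)(n)^2$ satisfying $F_\A\alpha=\beta$, and then disposes of the components in cohomological degree $i\neq 2n$ with the single phrase ``for weight reasons.'' You instead split an arbitrary pure $M$ into weight summands $M_w$ using the K\"unneth projectors from~(1), handle $w=0$ via a retraction onto a summand of some $H^{2n}(X)(n)$ (this step is correct --- the pushforward along $i_{dR}$ is a pair in $H^{2n}(X)(n)$ and the retraction $\pi\circ g$ gives the desired lift, using injectivity of $i_{dR}$ to see $\alpha=\beta$), and are left with showing $\hom_\spa(\one,F(M_w))=0$ for $w\neq 0$.

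That last step is the genuine gap, as you candidly flag. The polarization argument you sketch correctly yields $\pi_w(\beta,\beta)=p^w\pi_w(\alpha,\alpha)$ as an identity of rationals holding for almost all $p$, forcing both to vanish, and the relation $F_p^\vee\phi_\beta=p^w\phi_\alpha$ is also correct. But isotropy of a single vector is far from vanishing, and you have not actually carried out the ``mixed archimedean/$p$-adic argument'' combining Weil bounds with Newton--Hodge estimates --- you state that it ``should force'' vanishing without an argument. This is exactly the one step the paper itself dispatches with an unelaborated phrase, so you have correctly located the nontrivial point; until it is made precise, your proposal is incomplete at the $w\neq 0$ stage, although the surrounding Tannakian reductions are all sound and agree with the paper.

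Two smaller remarks. First, the paper never splits $M$ into weight pieces; it reduces to $M=H(X)(n)$ directly, which avoids needing the claim that every weight-$0$ pure numerical motive is a summand of some $H^{2n}(X)(n)$ --- that claim is true but you should at least say why (every simple object is a summand of some $H(X)(m)$, and weight purity forces the summand into a single cohomological degree). Second, when you write ``the cycle hypothesis'' for $(i_{dR}\alpha,i_{dR}\beta)$ you are using $n\in\Z_{\geq 0}$ implicitly; since Tate twists are available in $\cM$, one can always arrange $n\geq 0$ after twisting, but this is worth a sentence.
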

Here $H_{dR}(X)(n)$ is a Tate twist, meaning $F_{p,X}$ is scaled by $p^{-n}$.

\begin{proof}
By Theorem \ref{thfull}, Conjecture \ref{conup} for $\cM$ is equivalent to the assertion that $\cM\to\spa$ is full. For $M\in\cM$, write $Sp(M)$ for the \uni{} span associated with $M$. Then $\cM\to\spa$ is full if and only if, for every $M\in\cM$,
\[
\hom_{\cM}(\Q(0),M)=\hom_{\spa}(Sp(\Q(0)),Sp(M)).
\]
It suffices to consider only those $M$ of the form $H(X)(n)$. Morphisms of motives $\Q(0)\to H(X)(n)$ are in natural bijection with algebraic classes in $H^{2n}_{dR}(X)$. Morphisms of \uni{} spans $Sp(\Q(0))\to Sp(H(X)(n))$ correspond to pairs $\alpha,\beta\in H_{dR}(X)(n)$ such that $F_\A(\alpha)=\beta$, and such $\alpha$ and $\beta$ must live in $H_{dR}^{2n}(X)(n)$ for weight reasons. This completes the proof.
\end{proof}

\begin{remark}
An \emph{Ogus cycle} is an element $\alpha\in H_{dR}^{2n}(X)(n)$ such that $F_{p,X}(\alpha)=\alpha$ for all sufficiently large $p$. It is conjectured that every Ogus cycles is algebraic (see \cite{Ogu82}, \S4), and statement is equivalent to the fullness of the Ogus realization.
\end{remark}

\subsection{Galois theory}
\label{ssgal}
Let $\cM$ be a category of motives. As the coordinate ring of an affine group scheme, $\Pd$ has the structure of a commutative Hopf algebra over $\Q$. The ring of motivic periods $\Pm$ is an algebra-comodule over $\Pd$, and the corresponding group action
\[
\sp\,\Pd\curvearrowright\sp\,\Pm
\]
makes $\sp\,\Pm$ into a torsor for $\sp\,\Pd$.

If Conjecture \ref{conup} holds for $\cM$, the Hopf algebra structure descends to $\cP_\A(\cM)$. If in addition Conjecture \ref{conper} holds for $\cM$, then the coaction also descends. In this case we get ring homomorphisms
\begin{align}
\label{eqcomul}\Delta_{\A}:\cP_\A(\cM)&\to\cP_\A(\cM)\otimes \cP_\A(\cM),\\
\label{eqcoact}\Delta_\C:\cP_\C(\cM)&\to\cP_\A(\cM)\otimes \cP_\C(\cM),
\end{align}
making $\cP_\A(\cM)$ into a Hopf algebra and making $\cP_\C(\cM)$ into an algebra-comodule for $\cP_\A(\cM)$.

In practice, \eqref{eqcomul} and \eqref{eqcoact} can be computed explicitly. The computation depends a priori on some choices, and Conjecture \ref{conper} and \ref{conup} imply that the result is independent of the choices. Concretely, suppose $M\in\cM$. Given $\omega\in M_{dR}$ and $\eta\in M_{dR}^\du$ (where $M_{dR}^\du$ is the $\Q$-linear dual), we get a \uni{} period of $M$
\[
\big\langle\omega,\eta\big\rangle_\A:=
\bigg(\big\langle F_{p,M}\omega,\eta\big\rangle\bigg)_p\in\A,
\]
and $\cP_\A(\cM)$ is spanned over $\Q$ by elements of this form. To compute the comultiplication \eqref{eqcomul}, we choose a basis $\{v\}$ of $M_{dR}$, with dual basis $\{v^\du\}$, and we have
\begin{equation}
\label{eqcomex}
\Delta_\A\bigg(\big\langle\omega,\eta\big\rangle_\A\bigg)=\sum_{v}\big\langle\omega,v^\du\big\rangle_\A\otimes \big\langle v,\eta\big\rangle_\A.
\end{equation}
Similarly, for $\omega\in M_{dR}$ and $\gamma\in M_B^\du$, we get a complex period of $M$
\[
\big\langle\omega,\gamma\big\rangle_\C:=\big\langle comp(\omega),\eta\big\rangle\in\C.
\]
The coaction \eqref{eqcoact} is given by
\begin{equation}
\label{eqcoaex}
\Delta_\C\bigg(\big\langle\omega,\gamma\big\rangle_\C\bigg)=\sum_{v}\big\langle\omega,v^\du\big\rangle_\A\otimes \big\langle v,\gamma\big\rangle_\C.
\end{equation}

\begin{remark}
The formulas \eqref{eqcomex} and \eqref{eqcoaex} for the comultiplication and coaction make sense for varieties, even without an underlying category of motives.
\end{remark}

%
%
\section{$\cA$-valued periods}
\label{appp}

Kaneko and Zagier (unpublished) observe that certain elements of the ring
\[
\cA:=\frac{\prod_p\Z/p\Z}{\bigoplus_p \Z/p\Z}\cong\frac{\fil^0\A}{\fil^1\A}
\]
are analogous to periods. In this section we construct a version of \uni{} periods living in $\cA$. Applied to the category of mixed Tate motives over $\Z$, our construction recovers the finite multiple zeta values of Kaneko-Zagier (see Sec.\ \ref{ssfmzv}).

We first give a concrete definition for varieties. Suppose $X$ is a smooth projective variety over $\Q$. By Proposition \ref{propval},  $\p^{-n}F_{\A,X}$ takes $\fil^n H_{dR}(X)$ into $H_{dR}(X)\otimes\fil^0\A$, and we write
\[
F_{\cA,X}^{(n)}:\fil^n H_{dR}(X)\to H_{dR}(X)\otimes\cA
\]
for the composition of $\p^{-n}F_{\A,X}:\fil^n H_{dR}\to H_{dR}(X)\otimes\fil^0\A$ with the projection $\fil^0\A\to\cA$. If we choose a $\Q$-basis for $\fil^n H_{dR}(X)$, and extend it to a basis of $H_{dR}(X)$, we can represent $F_{\cA,X}^{(n)}$ by a non-square matrix with entries in $\cA$.

\begin{definition}
An \emph{$\cA$-valued period of $X$} is a $\Q$-linear combination of matrix coefficients for $F_{\cA,X}^{(n)}$ (as $n$ varies). We denote the set of $\cA$-valued periods of $X$ by $\cP_\cA(X)\subset\cA$.
\end{definition}

Concretely, the matrix for the \uni{} Frobenius on $H_{dR}(X)$ has columns that are divisible by various powers of $p$, according to the Hodge filtration on $H_{dR}(X)$, and we get an $\cA$-valued period by taking a matrix entry whose column is in the $n$-th level of the Hodge filtration, dividing by $p^n$, and reducing modulo $p$.

To understand $\cA$-valued periods in terms of motivic periods, we need a category $\cM$ of motives with Ogus realization that is known to satisfy the easier direction of Conjecture \ref{conups}. We make the following definition, which is the \uni{} version of a definition in \cite{Maz72}, p. 665.
\begin{definition}
\label{defdiv}
Let $\cM$ be a category of motives. We say the Ogus realization on $\cM$ is \emph{divisible} if $\per_\A(\fil^n\Pd)\subset\fil^n\A$ for all $n$.
\end{definition}
For example, it is known that the category mixed Tate motives over $\Z$ has divisible Ogus realization (see Sec.\ \ref{secMTZ}). The truth of the standard conjectures on algebraic cycles would imply that the Ogus realization for pure numerical motives is divisible.

For the remainder of this section, we assume $\cM$ is a category of motives equipped with a divisible Ogus realization. Consider 
\[
\Pd_0:=\frac{\fil^0\Pd}{\fil^1\Pd},
\]
which is the degree $0$ part of the associated graded ring of $\Pd$. The following definition is new.
\begin{definition}
The \emph{$\cA$-valued period map} is the ring homomorphism
\[
\per_{\cA}:\Pd_0\to\cA
\]
induced by $\per_{\A}$. We write $\cP_{\cA}$ for the image of $per_{\cA}$, and we call $\cP_{\cA}$ the \emph{ring of $\cA$-valued periods} of $\cM$.
\end{definition}

We expect that the map $\per_\cA$ is injective (this statement could be called the \emph{$\cA$-valued period conjecture}). Injectivity of $\per_\cA$ is essentially equivalent to Conjecture \ref{conups}.

\begin{theorem}
\label{thfups}
Conjecture \ref{conups} implies that $\per_\cA$ is injective. The converse is true if $\cM$ admits Tate twists.
\end{theorem}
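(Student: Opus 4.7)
The plan is to use the divisibility hypothesis to view $\per_\A$ as a morphism of filtered algebras and then pass to associated gradeds. Divisibility supplies, for each integer $m$, an induced map
\[
\mathrm{gr}^m\per_\A\colon \fil^m\Pd/\fil^{m+1}\Pd \longrightarrow \fil^m\A/\fil^{m+1}\A,
\]
whose degree-$0$ component is exactly $\per_\cA\colon\Pd_0\to\cA$. Conjecture \ref{conups} is equivalent to the assertion that every $\mathrm{gr}^m\per_\A$ is injective, so the two halves of the theorem reduce to (i) showing the degree-$0$ case follows and (ii) using Tate twists to bootstrap from the degree-$0$ case to all degrees.

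For (i), assume Conjecture \ref{conups} holds and let $\bar x\in\Pd_0$ lie in $\ker\per_\cA$. Lift $\bar x$ to some $x\in\fil^0\Pd$; the hypothesis $\per_\cA(\bar x)=0$ reads $\per_\A(x)\in\fil^1\A$. Applying Conjecture \ref{conups} with $n=1$ gives $x\in\fil^1\Pd$, so $\bar x=0$.

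For (ii), assume $\per_\cA$ is injective and $\cM$ admits Tate twists. Let $L\in\Pd$ be the de Rham period coming from a generator of $\omega_{dR}(\Q(-1))$, so that $L\in\fil^1\Pd$ and $\per_\A(L)=\p$. Because $\Q(1)\in\cM$, there is an inverse $L^{-1}\in\fil^{-1}\Pd$, and multiplication by $L$ is a bijection $\fil^n\Pd\iso\fil^{n+1}\Pd$ for every $n\in\Z$. Analogously, $\p$ is invertible in $\A$ with inverse $(1/p)_p$, and multiplication by $\p$ is a bijection $\fil^n\A\iso\fil^{n+1}\A$. Since $\per_\A$ is a ring map, multiplication by $L^m$ (respectively by $\p^m$) identifies $\mathrm{gr}^m\per_\A$ with $\per_\cA$, so $\per_\cA$ injective forces $\mathrm{gr}^m\per_\A$ injective for every $m\in\Z$. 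Given $x\in\Pd$ with $\per_\A(x)\in\fil^n\A$, the exhaustiveness of the Hodge filtration on each $M_{dR}$ gives $x\in\fil^m\Pd$ for some $m$; if $m<n$, the image of $x$ in $\fil^m\Pd/\fil^{m+1}\Pd$ is killed by $\mathrm{gr}^m\per_\A$ since $\per_\A(x)\in\fil^n\A\subseteq\fil^{m+1}\A$, and injectivity forces $x\in\fil^{m+1}\Pd$. Iterating yields $x\in\fil^n\Pd$.

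The main subtlety I anticipate is verifying that Tate twists really do promote $L$ to an element acting as a bijection $L\cdot\colon\fil^n\Pd\to\fil^{n+1}\Pd$ for every $n$; this requires the filtration on $\Pd$ to be multiplicative and the de Rham period of $\Q(1)$ to serve as a genuine inverse to $L$ with the correct filtration shift. Once this shifting structure is in place, the identification of each $\mathrm{gr}^m\per_\A$ with $\per_\cA$ is formal, and the induction on $m$ closes the argument.
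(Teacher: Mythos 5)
The proposal is correct and takes essentially the same approach as the paper's own proof: the forward direction is the $n=1$ case of Conjecture \ref{conups}, and the converse uses multiplication by powers of the Lefschetz period $\Ld$ (available via Tate twists) to shift into degree $0$, together with injectivity of $\per_\cA$, to climb up the filtration. The only difference is cosmetic: you organize the converse as an induction on associated gradeds, whereas the paper phrases it as a proof by contradiction with $m=\max\{M\leq n:\alpha\in\fil^M\Pd\}$; the underlying mechanism is identical.
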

\begin{proof}
It is immediate that Conjecture \ref{conups} implies $\per_\A$ is injective. Conversely, suppose that $\per_\A$ is injective, and suppose $\alpha\in\Pd$ satisfies $\per_\A(\alpha)\in\fil^n\A$. Let $m=\max\{M\leq n:\alpha\in\fil^M\Pd\}$. We will show that $m=n$.

For the sake of contradiction, suppose $m<n$. By assumption there is a rank-$1$ object $\Q(-1)\in\cM$ (the Lefschetz motive), whose de Rham period $\Ld\in\Pd$ is mapped to $\p$ by $\per_\A$. We have $(\Ld)^{-m}\alpha\in\fil^0\Pd$, and 
\[
\per_\A\big((\Ld)^{-m}\alpha\big)=\p^{-m}\per_\A(\alpha)\in\fil^{n-m}\A\subset\fil^1\A.
\]
This shows $\per_\cA((\Ld)^{-m}\alpha)=0$. Because $\per_\cA$ is injective by assumption, we conclude $(\Ld)^{-m}\alpha\in\fil^1\Pd$. It follows that $\alpha\in\fil^{m+1}\Pd$, contradicting the maximality of $m$.
\end{proof}

The finite multiple zeta values of Kaneko-Zagier are the $\cA$-valued periods of the category of mixed Tate motives over $\Z$ (Theorem \ref{thfm} below). The category $\MM$ of Artin motives over $\Q$ has trivial Hodge filtration, and for this category the $\cA$-valued period map is a map $\per_\A:\Pd\to\cA$. The paper \cite{Ros18b} involves an application of the $\cA$-valued period map for $\MM$ to an analogue of the Skolem-Mahler-Lech theorem.

\begin{remark}
If one is interested in congruences modulo $p^n$, one can consider the ring
\[
\cA_n=\frac{\prod_p\Z/p^n\Z}{\bigoplus_p \Z/p^n\Z},
\]
and there is an $\cA_n$-valued period map
\[
\per_{\cA_n}:\frac{\fil^0\Pd}{\fil^n\Pd}\to\cA_n.
\]
If $\cM$ admits Tate twists, then Conjecture \ref{conups} is equivalent to the statement that $\per_{\cA_n}$ is injective for one (equivalently, every) positive integer $n$.
\end{remark}

%
%
\section{$\Ai$-valued periods}
\label{appc}
Some arithmetically interesting quantities can be expressed as infinite sums of $p$-adic periods in a manner that is uniform in $p$  (we give some examples in Sec.\ \ref{mtzc}). For example, for $k\geq 2$ an integer, the $p$-adic zeta value $\zeta_p(k)$ is a $p$-adic period, and a result of Washington \cite{Was98} expresses a harmonic number in terms of $\zeta_p(k)$:
\begin{equation}
\label{eqwash}
p^s\sum_{\substack{n=1\\p\nmid n}}^{pr}\frac{1}{n^s}=\sum_{k=0}^\infty (-1)^k {r+k\choose k+1} r^{k+1}\zeta_p(s+k+1),
\end{equation}
This formula has a generalization due to Jarossay \cite{Jar15a}.

Here we describe a version of \uni{} periods for treating these infinite sums. This notion of period was considered in \cite{Ros18} for mixed Tate motives over $\Z$.

The filtration $\fil^\bullet$ on $\A$ is neither exhaustive nor separated. To deal with infinite sums of \uni{} periods, we replace $\A$ with
\[
\Ai:=\frac{\bigcup_n \fil^n\A}{\bigcap_n\fil^n\A}=\frac{\left\{(a_p)\in\prod_p\Q_p:v_p(a_p)\text{ bounded below}\right\}}{\left\{(a_p)\in\prod_p\Q_p:v_p(a_p)\to\infty \text{ as }p\to\infty\right\}}.
\]
We write $\fil^\bullet$ for the induced filtration on $\Ai$, which is exhaustive and separated. The ring $\Ai$ is complete with respect to the topology induced by $\fil^\bullet$.

Fix a category of motives $\cM$ with divisible Ogus realization (Definition \ref{defdiv}). This means $per_{\A}$ takes $\fil^n\Pd$ into $\fil^n\A$ for each $n$, so $\per_\A$ induces a continuous homomorphism of the completions.
\begin{definition}
The \emph{completed \uni{} period map} is the continuous ring homomorphism
\[
\hper:\Pdh\to\Ai,
\]
where $\Pdh$ is the completion of $\Pd$ with respect to the Hodge filtration. We write $\cP_{\Ai}(\cM)$ for the image of $\hper$, and we call $\cP_{\Ai}(\cM)$ the \emph{ring of $\Ai$-valued periods} of $\cM$.
\end{definition}

In terms of the example \eqref{eqwash}, we will see in Sec.\ \ref{secMTZ} that $\zeta_p(k)$ is a $p$-adic period of the category of mixed Tate motives over $\Z$. For this category, there are elements $\Zd(k)\in\Pd$ that $\per_\A$ maps to $\big(\zeta_p(k)\big)_p$, and \eqref{eqwash} implies $\per_\A$ takes the element
\[
\sum_{k=0}^\infty (-1)^k {r+k\choose k+1} r^{k+1}\Zd(s+k+1)\in\Pdh
\]
to
\[
\lp p^s\sum_{\substack{n=1\\p\nmid n}}^{pr}\frac{1}{n^s}\rp_p\in\A.
\]

\begin{remark}
Conjecture \ref{conups}  for $\cM$ is equivalent to the statement $\hper$ induces an isomorphism of filtered algebras $\Pdh\cong \cP_{\Ai}(\cM)$. In particular, Conjecture \ref{conups} would imply $\cP_{\Ai}(\cM)$ is complete with respect to $\fil^\bullet$.
\end{remark}

%
%
\section{Mixed Tate motives over $\Z$}
\label{secMTZ}
The theory of mixed Tate motives and their periods is well-developed. In this section, we compute the \uni{} periods of $\MT(\Z)$. We also compute the $\cA$-valued and $\Ai$-valued periods of $\MT(\Z)$.

Mixed Tate motives are described in \cite{Del89}. In \cite{Del05} a construction is given for the category of mixed Tate motives unramified over the ring of $S$-integers in a number field. Theory of motivic iterated integrals appears in \cite{Gon05}.  A computation of the periods of $\MT(\Z)$ is given in \cite{Bro12}. The crystalline realization (for each prime $p$) on $\MT(\Z)$ is constructed in \cite{Yam10}, and these assemble to give an Ogus realization.

Recall that a \emph{composition} is a finite ordered list $\bs=(s_1,\ldots,s_k)$ of positive integers. The \emph{weight} of $\bs$ is $|\bs|:=s_1+\ldots+s_k$. The various types of periods of $\MT(\Z)$ are indexed by compositions.

\subsection{Motivic and complex periods}

For $\bs=(s_1,\ldots,s_k)$ a composition satisfying $s_1\geq 2$, the \emph{multiple zeta value} (or \emph{MZV}) is
\begin{equation}
\label{eqdefmzv}
\zeta(\bs):=\sum_{n_1>\ldots>n_k\geq 1}\frac{1}{n_1^{s_1}\ldots n_k^{s_k}}\in\R.
\end{equation}
Using the so-called shuffle regularization, it is possible to extend the definition to the case $s_1=1$. It is known the the MZVs are periods of mixed Tate motives over $\Z$.

The ring $\cP^\fm$ of motivic periods of $\MT(\Z)$ contains elements $\zeta^\fm(\bs)$, the \emph{motivic MZVs}, as $\bs$ ranges over the compositions. The period map takes $\zm(\bs)$ to $\zeta(\bs)$. The elements $\zeta^\fm(\bs)$ satisfy many algebraic relations, and a relation satisfied by $\zeta(\bs)$ is called \emph{motivic} if the corresponding relation holds for $\zm(\bs)$. There is also an invertible element $\L\in\cP^\fm$, the \emph{motivic Lefschetz period}, which maps to $2\pi i$ under the period map. Euler's calculation that $\zeta(2)=\pi^2/6$ lifts to the motivic setting, and we have $\zeta^\fm(2)=-(\Lm)^2/24$.

\begin{proposition}[Brown \cite{Bro12}]
The ring of motivic periods of $\MT(\Z)$ is spanned as a $\Q[\Lm,(\Lm)^{-1}]$-module by the motivic MZVs.
\end{proposition}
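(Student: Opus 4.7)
The plan is to reduce the statement to Brown's theorem that $\MT(\Z)$ is generated, as a Tannakian category, by the motivic fundamental torsor of $\P^1_\Q \setminus \{0,1,\infty\}$ with tangential basepoints at $0$ and $1$ (together with the Tate objects $\Q(n)$). Granting that generation result, the argument is essentially formal plus an identification of the relevant matrix coefficients.

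First I would use the general fact that for any Tannakian category $\cM$ with fibre functors $\omega_{dR}$ and $\omega_B$, the ring of motivic periods $\cP^\fm$ is spanned over $\Q$ by matrix coefficients $\langle \omega, \gamma \rangle_\fm$ as $M$ runs through $\cM$ and $\omega \in M_{dR}$, $\gamma \in M_B^\du$. Moreover, if $\cM$ is tensor-generated (allowing duals, subquotients, and direct sums) by a single object $N$ and a collection of invertible objects, then every matrix coefficient on an arbitrary $M$ reduces to a $\Q$-linear combination of matrix coefficients on $N^{\otimes k}$ times monomials in the motivic periods of the invertible objects. Applied to $\cM = \MT(\Z)$, where $N$ is the motivic fundamental torsor of $\P^1\setminus\{0,1,\infty\}$ and the invertible objects are the Tate twists $\Q(n)$, this produces the $\Q[\Lm,(\Lm)^{-1}]$-module structure: the powers of $\Lm$ absorb all contributions from Tate twists, since the motivic period of $\Q(-1)$ paired against its canonical de Rham and Betti classes is $\Lm$.

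Second, I would identify the remaining matrix coefficients explicitly. Goncharov's construction shows that the matrix coefficients of the de Rham–Betti comparison on tensor powers of the motivic fundamental torsor of $\P^1\setminus\{0,1,\infty\}$ (with basepoints at $0$ and $1$) are $\Q$-linear combinations of motivic iterated integrals $I^\fm(0;\epsilon_1,\ldots,\epsilon_n;1)$ with each $\epsilon_i \in \{0,1\}$. Such an iterated integral, after shuffle regularization at the divergent boundary words, equals a $\Q$-polynomial in the motivic MZVs $\zm(\bs)$ (the regularization of $\zm(1)$ is set to zero, and the shuffle relations express everything else in terms of convergent MZVs). Combining this identification with the previous paragraph gives the desired spanning statement.

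The main obstacle is the first step, Brown's generation theorem: proving that the motivic fundamental torsor of $\P^1\setminus\{0,1,\infty\}$, together with Tate twists, really does tensor-generate all of $\MT(\Z)$. This is the deep input; it ultimately relies on Borel's computation of $K_\bullet(\Z)\otimes\Q$ (controlling the extensions available in $\MT(\Z)$) and on Brown's dimension count matching the conjectural number of motivic MZVs of each weight. Once that is in hand, the rest of the argument is the essentially bookkeeping described above: recognizing matrix coefficients as iterated integrals, and converting iterated integrals to MZVs by shuffle regularization, with Tate twists accounting for the $\Q[\Lm,(\Lm)^{-1}]$ action.
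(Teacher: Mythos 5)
The paper gives no proof of this proposition; it simply cites Brown \cite{Bro12}, so there is no ``paper's own proof'' to compare against in detail. Your sketch is a valid deduction: if $\MT(\Z)$ is tensor-generated by the motivic fundamental group of $\mathbb{P}^1\setminus\{0,1,\infty\}$ and by Tate twists, then the standard Tannakian fact that matrix coefficients on subquotients of tensor words in the generators span the full coordinate ring of $\underline{\hom}^\otimes(\omega_{dR},\omega_B)$, combined with Goncharov's identification of those matrix coefficients as motivic iterated integrals $I^\fm(0;\epsilon_1,\ldots,\epsilon_n;1)$ and the shuffle-regularization of the latter into polynomials in the $\zm(\bs)$, does yield the $\Q[\Lm,(\Lm)^{-1}]$-spanning statement.

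One point worth flagging about the internal logic: you present the generation theorem as the deep input and the spanning statement as a formal corollary, but in \cite{Bro12} the organization is essentially the reverse. Brown's technical work is to show, via the $2$-adic level filtration argument, that the Hoffman elements $\zm(s_1,\ldots,s_k)$ with $s_i\in\{2,3\}$ already span the space of motivic MZVs of each weight; comparing with the known Deligne--Goncharov/Borel dimension formula then forces the motivic MZVs to exhaust the ring of motivic periods, and generation of the category by $\pi_1^{\mathrm{mot}}(\mathbb{P}^1\setminus\{0,1,\infty\})$ is the corollary. The two statements are equivalent given the dimension count, so your direction of implication is not wrong, but it would be misleading to suggest one can prove the spanning statement ``from'' the generation theorem without circularity: both are faces of the same theorem, and the proof must go through the explicit Hoffman-element argument either way. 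A minor wording issue: the motivic fundamental torsor is a pro-object, so ``tensor powers of the torsor'' should really be ``the finite-dimensional subquotients of its affine ring (equivalently, of the pro-unipotent completion),'' but this does not affect the substance of the argument.
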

\begin{corollary}
The ring of complex periods of $\MT(\Z)$ is spanned as a $\Q[2\pi i,(2\pi i)^{-1}]$-module by the MZVs.
\end{corollary}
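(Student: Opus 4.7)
The plan is to deduce the corollary directly from Brown's proposition by applying the complex period map $\per : \cP^{\fm} \to \C$. Recall that, by definition, $\cP_\C(\MT(\Z))$ is the image of $\per$, so every complex period of $\MT(\Z)$ is of the form $\per(x)$ for some motivic period $x \in \cP^{\fm}$.

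By Brown's proposition, any such $x$ can be written as a finite $\Q[\Lm,(\Lm)^{-1}]$-linear combination of motivic MZVs $\zm(\bs)$. Applying $\per$, which is a ring homomorphism, and using the two facts recorded in the text just before the proposition, namely $\per(\Lm) = 2\pi i$ and $\per(\zm(\bs)) = \zeta(\bs)$, we obtain an expression for $\per(x)$ as a $\Q[2\pi i,(2\pi i)^{-1}]$-linear combination of MZVs. Since $\per(\Lm) = 2\pi i$ is invertible in $\C$, there is no issue passing the inverse through the period map. Conversely, every MZV $\zeta(\bs)$ lies in $\cP_\C(\MT(\Z))$ because it is $\per(\zm(\bs))$, and $2\pi i \in \cP_\C(\MT(\Z))$ as well.

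There is essentially no obstacle: the corollary is a purely formal consequence of applying the ring homomorphism $\per$ to Brown's spanning statement. The only minor point to check is that the module structure is preserved, which follows from $\per$ being a $\Q$-algebra homomorphism sending the generators $\Lm, (\Lm)^{-1}$ of the coefficient ring to $2\pi i, (2\pi i)^{-1}$ respectively.
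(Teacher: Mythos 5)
Your proof is correct and is exactly the argument the paper implicitly intends (the paper states the corollary without proof, as an immediate consequence of applying the ring homomorphism $\per$, which sends $\Lm\mapsto 2\pi i$ and $\zm(\bs)\mapsto\zeta(\bs)$, to Brown's proposition).
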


There is a $\Z$-grading on $\cP^\fm$ called the grading by weight,\footnote{The weight here is one half of the motivic weight} where $\L$ has weight $1$ and $\zm(\bs)$ has weight $\bs$. One often works with the subalgebra $\cH\subset\cP^\fm$ spanned by the motivic MZVs. The ring $\cH$ is called the \emph{ring of motivic MZVs}, and it has the advantage of being $\mathbb{N}$-graded with finite-dimensional graded pieces.

\subsection{De Rham and $p$-adic periods}
\label{ssdr}
For $\bs$ a composition, there are $p$-adic analogues $\zeta_p(\bs)\in\Q_p$ of the MZVs,\footnote{The $p$-adic MZVs we use here were constructed by Deligne, but there is a different version due to Furusho \cite{Fur04}.} arising from the action of Frobenius on the crystalline fundamental group of the thrice punctured line (see \cite{Del02}, or \S5.28 of \cite{Del05}). The $p$-adic MZVs are $p$-adic periods of $\MT(\Z)$. Results of Jarossay \cite{Jar15} give explicit computations of $\zeta_p(\bs)$.

The ring $\Pd$ of de Rham periods of $\MT(\Z)$ contains elements $\Zd(\bs)$, the \emph{de Rham MZVs}, as $\bs$ ranges over the compositions. Objects of $\MT(\Z)$ are unramified at every prime, so for each $p$ there is a $p$-adic period map $\per_p:\Pd\to\Q_p$, which takes $\Zd(\bs)$ to $\zeta_p(\bs)$. There is an invertible element $\Ld$, the de Rham Lefschetz period, which the period map takes to $p$.

\begin{proposition}[Brown \cite{Bro14}]
\label{pdr}
The ring of de Rham periods of $\MT(\Z)$ is spanned as a $\Q[\Ld,(\Ld)^{-1}]$-module by the de Rham MZVs.
\end{proposition}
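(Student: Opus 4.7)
The plan is to deduce this directly from the preceding proposition on motivic periods (Brown \cite{Bro12}) via the Tannakian formalism, exploiting the fact that both $\cP^\fm$ and $\Pd$ arise as coordinate rings of schemes naturally associated to the same Tannakian category $\MT(\Z)$.

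First, I would invoke the standard fact that for any neutral Tannakian category $\cN$ over $\Q$ with fibre functor $\omega$, the coordinate ring $\mathcal{O}(\au(\omega))$ is spanned as a $\Q$-module by the matrix coefficients $g \mapsto \ell(g \cdot v)$, for $v \in \omega(M)$ and $\ell \in \omega(M)^\du$ as $M$ ranges over objects of $\cN$; the same description applies to $\mathcal{O}(\underline{\hom}^\otimes(\omega_1, \omega_2))$ for a pair of fibre functors. The preceding proposition for $\cP^\fm$ encodes two assertions about the Tannakian structure of $\MT(\Z)$: (a) the tensor constructions on the motivic fundamental torsor $\pi_1^{\mathrm{mot}}(\mathbb{P}^1 \setminus \{0,1,\infty\}, \vec{01}, \vec{10})$ together with the Tate twists $\Q(n)$ Tannakian-generate the category, so their matrix coefficients span; and (b) after applying the shuffle product relation, these matrix coefficients reduce to the motivic MZVs $\zm(\bs)$ together with powers of $\Lm$.

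Second, since (a) and (b) are assertions about objects and their relations in $\MT(\Z)$ rather than about a particular realization, they transfer verbatim with $\omega_{dR}$ in place of the Betti--de Rham comparison bifunctor. Applying (a) yields that $\Pd$ is spanned as a $\Q$-module by the de Rham matrix coefficients of the same generating objects. By (b), after imposing the shuffle relations --- formal identities for iterated integrals valid in every realization --- these matrix coefficients reduce to the de Rham MZVs $\Zd(\bs)$ and powers $(\Ld)^{\pm n}$. This gives precisely the claimed spanning of $\Pd$ as a $\Q[\Ld, (\Ld)^{-1}]$-module.

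The main obstacle lies entirely in the invocation of Brown's earlier theorem --- specifically the deep assertion (a) that the motivic fundamental torsor of $\mathbb{P}^1 \setminus \{0,1,\infty\}$ Tannakian-generates $\MT(\Z)$. Once that is granted, the translation to the de Rham side is a routine exercise in the Tannakian dictionary, since the shuffle relations in (b) are elementary combinatorial identities satisfied by iterated integrals in every realization, motivic or otherwise.
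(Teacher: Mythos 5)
The paper gives no proof of Proposition~\ref{pdr}: it is cited directly to Brown~\cite{Bro14}, just as the parallel spanning statement for motivic periods is cited to~\cite{Bro12}. So there is nothing in the paper to compare your sketch against; I will instead comment on its internal logic.

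You correctly identify the common source of both spanning statements: Brown's theorem that $\MT(\Z)$ is Tannakian-generated by the motivic fundamental torsor of $\mathbb{P}^1\setminus\{0,1,\infty\}$ together with the Tate twists. Granted that, the de Rham case does follow by the standard matrix-coefficient description of the coordinate ring of $\au(\omega_{dR})$ together with the shuffle Hopf structure on the de Rham fundamental groupoid. But the opening framing is logically inverted. You say the motivic-period proposition ``encodes'' assertions (a) and (b), and you propose to \emph{deduce} the de Rham statement from it; in fact the motivic-period spanning is a \emph{consequence} of (a) and (b), and one cannot recover the Tannakian-generation assertion (a) from the mere fact that some collection of matrix coefficients spans the coordinate ring. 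What you are actually doing is reproving both propositions in parallel from the generation theorem, not deducing one from the other; that is a legitimate and essentially correct plan, but it should be stated as such. A second, smaller imprecision: assertion (b) does not transfer ``verbatim.'' Matrix coefficients of the bitorsor $\underline{\hom}^\otimes(\omega_{dR},\omega_B)$ and of the group $\au(\omega_{dR})$ are different kinds of objects (a comparison isomorphism between two realizations, versus an internal automorphism of one), and the de Rham MZVs satisfy strictly more relations than the motivic MZVs --- notably $\Zd(2)=0$, as the paper remarks a few lines below. What transfers is the shuffle Hopf structure on the generating object, from which the spanning claim follows; the full set of relations on the two sides is not the same.
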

\begin{corollary}
The ring of $p$-adic periods of $\MT(\Z)$ is spanned over $\Q$ by the $p$-adic MZVs.
\end{corollary}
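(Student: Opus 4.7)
The plan is to apply the $p$-adic period map $\per_p:\Pd\to\Q_p$ to the explicit generating set supplied by Proposition \ref{pdr} and then simplify. By definition, the ring of $p$-adic periods is the image of $\per_p$, so it suffices to compute this image. Unpacking Proposition \ref{pdr}, every element of $\Pd$ can be written as a finite $\Q$-linear combination of monomials $(\Ld)^n\Zd(\bs)$, where $n$ ranges over $\Z$ and $\bs$ over all compositions. Since $\per_p$ is a ring homomorphism that sends $\Ld$ to $p$ and $\Zd(\bs)$ to $\zeta_p(\bs)$, applying it term-by-term shows that the image is the $\Q$-linear span of the elements $p^n\zeta_p(\bs)$ for $n\in\Z$ and $\bs$ a composition.

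The second and final step is to observe that $p\in\Q^{\times}$, so scaling by an integer power of $p$ preserves $\Q$-linear span. Hence the $\Q$-span of $\{p^n\zeta_p(\bs)\}$ equals the $\Q$-span of $\{\zeta_p(\bs)\}$, which is precisely the claim. There is no real obstacle: the corollary is a formal consequence of Proposition \ref{pdr} together with the fact that the invertible element $\Ld\in\Pd$ is sent by $\per_p$ into $\Q^{\times}\subset\Q_p$, so the $\Q[\Ld,(\Ld)^{-1}]$-module structure collapses to a $\Q$-module structure in the image. All the substantive content sits in Proposition \ref{pdr}.
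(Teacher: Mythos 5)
Your proof is correct and takes the same (essentially the only) route the paper has in mind: the corollary is an immediate consequence of Proposition~\ref{pdr} once one notes that $\per_p$ is a ring homomorphism sending $\Ld\mapsto p\in\Q^{\times}$, so the $\Q[\Ld,(\Ld)^{-1}]$-span collapses to the $\Q$-span of the $p$-adic MZVs. The paper omits the proof precisely because it is this one-line observation.
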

The de Rham MZVs satisfy the same $\Q$-linear relations as the motivic MZVs, along with the additional relation $\Zd(2)=0$, thus the $p$-adic MZVs also satisfy these relations.

\begin{remark}
The $p$-adic period map is not injective because it kills $\Ld-p$. It is conjectured that for each $p$, the kernel of $\per_p$ is generated as an ideal by $\Ld-p$.
\end{remark}

\subsection{\Uni{} periods}
The \uni{} period map $\per_\A$ takes $\Zd(\bs)$ to the \emph{\uni{} MZV}
\[
\zeta_\p(\bs):=\big(\zeta_p(\bs)\big)_p\in\A,
\]
and takes $\Ld$ to $\p$.
\begin{proposition}
The ring of \uni{} periods of $\MT(\Z)$ is spanned as a $\Q[\p,\p^{-1}]$-module by the \uni{} MZVs.
\end{proposition}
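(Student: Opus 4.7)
The plan is to obtain this proposition as an essentially immediate consequence of Brown's description of $\Pd$ (Proposition \ref{pdr}) together with the explicit formulas $\per_\A(\Ld) = \p$ and $\per_\A(\Zd(\bs)) = \zeta_\p(\bs)$. By definition, $\cP_\A(\MT(\Z))$ is the image of the ring homomorphism $\per_\A \colon \Pd \to \A$, so controlling a spanning set on the source transfers directly to a spanning set on the image.

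First I would record that $\p$ is invertible in $\A$: the sequence $(1/p)_p$ makes sense because $p \in \Q_p^\times$ for every prime, and it provides a two-sided inverse of $\p$. Consequently $\per_\A$ extends to a homomorphism of $\Q[\Ld,(\Ld)^{-1}]$-algebras, sending $\Q[\Ld,(\Ld)^{-1}]$ into $\Q[\p,\p^{-1}] \subset \A$.

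Next, I would invoke Proposition \ref{pdr}, which gives that every element of $\Pd$ can be written as a finite $\Q[\Ld,(\Ld)^{-1}]$-linear combination of the de Rham MZVs $\Zd(\bs)$. Applying $\per_\A$ to such an expression and using that it is a $\Q$-algebra map sending $\Ld \mapsto \p$ and $\Zd(\bs) \mapsto \zeta_\p(\bs)$, any \uni{} period is a $\Q[\p,\p^{-1}]$-linear combination of the \uni{} MZVs $\zeta_\p(\bs)$. Conversely, every such combination lies in the image of $\per_\A$, so equality of the two spans holds.

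I do not anticipate a genuine obstacle here: the argument is purely a transport of structure along the surjection $\Pd \twoheadrightarrow \cP_\A(\MT(\Z))$, and the only nontrivial input, Brown's theorem, is already available. The mild point worth flagging in the write-up is the invertibility of $\p$ in $\A$, since without it one could not pass the negative powers of $\Ld$ through $\per_\A$ and the module structure over $\Q[\p,\p^{-1}]$ would not make sense.
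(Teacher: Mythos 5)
Your argument is correct and matches the paper's proof, which simply applies $\per_\A$ to the spanning statement of Proposition \ref{pdr}. The extra remark on the invertibility of $\p$ in $\A$ is a reasonable detail to spell out but does not change the approach.
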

\begin{proof}
Follows from Proposition \ref{pdr} by applying $\per_\A$.
\end{proof}

Like $\cP^\fm$, the ring $\Pd$ is graded by weight, where $\Ld$ has weight $1$ and $\Zd(\bs)$ has weight $|\bs|$. The Hodge filtration is induced by the grading: $\fil^n\Pd$ is spanned by elements of weight $n$ and higher. It is known \cite{Cha17} that the $p$-adic multiple zeta values satisfy
\begin{equation}
\label{eqzpc}
\zeta_p(\bs)\in p^{|\bs|}\Z_p
\end{equation}
for all $p>|\bs|$, which means that the Ogus realization on $\MT(\Z)$ is divisible, i.e.\ $\per_\A(\fil^n\Pd)\subset\fil^n\A$ for all $n$. So $\cA$-valued periods (Sec.\ \ref{appp}) and $\Ai$-valued periods (Sec.\ \ref{appc}) are defined for $\MT(\Z)$.

The statement that $\hper$ is injective for $\MT(\Z)$, which is equivalent to Conjecture \ref{conups} for $\MT(\Z)$, was stated by the author in \cite{Ros18} (Conjecture 1.3). A related conjecture can be found in \cite{Jar16d} (Conjecture 7.7).

\subsection{Multiple harmonic sums}
Multiple harmonic sums are truncated versions of the MZVs. As we will see, they are related to the $\cA$-valued periods and $\Ai$-valued periods of $\MT(\Z)$.
\begin{definition}
Let $\bs=(s_1,\ldots,s_k)$ be a composition and $N$ a positive integer. The quantity
\[
H_N(s_1,\ldots,s_k):=\sum_{N\geq n_1>\ldots>n_k\geq 1}\frac{1}{n_1^{s_1}\ldots n_k^{s_k}}\in\Q
\]\
is called a \emph{multiple harmonic sum}.
\end{definition}

Multiple harmonic sums are known to have interesting arithmetic properties, particularly in the case $N=p-1$ with $p$ prime (see \cite{Hof04a}, \cite{Zha08}).
The following formula of Jarossay \cite{Jar15a} expresses multiple harmonic sums $\H(\bs)$ in terms of $p$-adic MZVs.
\begin{theorem}
\label{pj}
Let $\bs=(s_1,\ldots,s_k)$ be a composition. There is a $p$-adically convergent series identity
\begin{gather}
\label{eqjar}
\hspace{-20mm}p^{|\bs|}\H(\bs)=\sum_{i=0}^k \sum_{\ell_1,\ldots,\ell_i\geq 0}(-1)^{s_1+\ldots+s_i} \prod_{j=1}^i {s_j+\ell_j-1\choose\ell_j}
\hfill\\
\hspace{30mm}\hfill \zeta_p(s_i+\ell_i,\ldots,s_1+\ell_1)\zeta_p(s_{i+1},\ldots,s_k).
\end{gather}
\end{theorem}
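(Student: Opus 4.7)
The plan is to prove Jarossay's formula by realizing the $p$-adic MZVs $\zeta_p(\bs)$ as Coleman iterated integrals on the thrice-punctured line $\mathbb{P}^1 \setminus \{0,1,\infty\}$ and then manipulating the integrals to produce the stated series identity. The starting point is the presentation
\[
\zeta_p(s_1, \ldots, s_k) = (-1)^k \int_{0}^{1} \omega_0^{s_1 - 1} \omega_1 \cdots \omega_0^{s_k - 1} \omega_1,
\]
where $\omega_0 = dz/z$, $\omega_1 = dz/(1-z)$, and the integral is the Coleman iterated integral computed with respect to tangential basepoints at $0$ and $1$ (this is the overconvergent realization of the $p$-adic unipotent fundamental groupoid whose coefficients recover Deligne's $p$-adic MZVs).

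First I would decompose the path of integration from $0$ to $1$ by inserting an intermediate point $q$ with $|q|_p = 1$ and $|1-q|_p < 1$, for instance a Teichm\"uller lift of a primitive $p$-th root of unity or a tangential basepoint adjacent to $1$. The composition-of-paths identity for iterated integrals then factors each summand into a product of a piece from $0$ to $q$ and a piece from $q$ to $1$; the summation index $i$ in the theorem corresponds to the number of $\omega_1$-forms falling in the second piece, and the orientation reversal on the $0$-to-$q$ segment accounts both for the reversal of $(s_1,\ldots,s_i)$ to $(s_i+\ell_i,\ldots,s_1+\ell_1)$ and for the sign $(-1)^{s_1+\cdots+s_i}$. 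The $0$-to-$q$ piece itself is computed as a power series, using the $p$-adically convergent expansion
\[
\frac{1}{(pm+r)^{s_j}} = \sum_{\ell_j \geq 0} (-1)^{\ell_j} \binom{s_j + \ell_j - 1}{\ell_j} \frac{(pm)^{\ell_j}}{r^{s_j + \ell_j}}
\]
valid for $r \in \{1,\ldots,p-1\}$ and $m \geq 0$; this produces exactly the binomial coefficients and extra indices $\ell_j$ appearing in the theorem. After multiplying by $p^{|\bs|}$ and summing over the allowed residues, the $1/r^{s_j+\ell_j}$ factors reassemble into $H_{p-1}(\bs)$ on the left, while the $(pm)^{\ell_j}$ factors group into the factor $\zeta_p(s_i+\ell_i,\ldots,s_1+\ell_1)$ on the right.

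The main obstacle is the combinatorial bookkeeping: one must check the $p$-adic convergence of the double series on the right, handle the shuffle regularization needed when some $s_j = 1$ (so that $\zeta_p(1,\ldots)$ is meaningful), and verify that the cross-terms from the iterated-integral factorization reorganize into products $\zeta_p(s_i+\ell_i,\ldots,s_1+\ell_1)\,\zeta_p(s_{i+1},\ldots,s_k)$ with exactly the claimed coefficients and no extra contributions. Carrying out this identification carefully is the technical core of Jarossay's argument in \cite{Jar15a}.
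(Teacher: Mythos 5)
The paper does not prove Theorem~\ref{pj}: it is stated as ``the following formula of Jarossay'' and cited to~\cite{Jar15a}, so there is no proof in the paper to compare against. Evaluating your sketch on its own terms, the starting point --- realizing $p$-adic MZVs through the unipotent $p$-adic fundamental groupoid of $\mathbb{P}^1\setminus\{0,1,\infty\}$ --- is the right framework, and this is indeed the setting in which Jarossay works. But the mechanism you describe is not established and, as written, has conceptual gaps.

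The central difficulty is the step where you claim that after expanding $1/(pm+r)^{s_j}$ via the binomial series, ``the $1/r^{s_j+\ell_j}$ factors reassemble into $H_{p-1}(\bs)$ \ldots\ while the $(pm)^{\ell_j}$ factors group into the factor $\zeta_p(s_i+\ell_i,\ldots,s_1+\ell_1)$.'' The $p$-adic MZVs $\zeta_p(\bs)$ used here are Deligne's, defined via the crystalline Frobenius (and equivalently, up to normalization, as Coleman iterated integrals). They are not Dirichlet-type sums over integers $m$, so there is no index $m$ available for those $(pm)^{\ell_j}$ factors to ``group into.'' Some further mechanism --- and this is precisely where the Frobenius equivariance enters Jarossay's actual argument --- is needed to convert the discrete sum into an iterated-integral quantity. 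Relatedly, the sign $(-1)^{s_1+\cdots+s_i}$ in the statement does not come from reversing a path of integration in a Coleman integral decomposition at an arbitrary intermediate point $q$ with $|q|_p=1$, $|1-q|_p<1$; in the known proofs it comes from the interplay between the Frobenius-twisted path and the canonical de Rham path (essentially the map $z\mapsto z^p$), and the reversal $(s_1,\ldots,s_i)\mapsto(s_i+\ell_i,\ldots,s_1+\ell_1)$ together with the shift by the $\ell_j$'s is a single phenomenon tied to that twist, not two separate effects as your write-up suggests. Without making this Frobenius structure explicit, the sketch does not reduce to ``combinatorial bookkeeping''; the crucial identity that produces the right-hand side is missing.
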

It follows from \eqref{eqzpc} that the convergence of the infinite series on the right hand side of \eqref{eqjar} is uniform in $p$, in the sense that it induces a convergent series identity in $\Ai$.

\subsection{$\cA$-valued periods}
\label{ssfmzv}
To study congruences between multiple harmonic sums $\H$ modulo $p$, Kaneko and Zagier made the following definition.
\begin{definition}[Kaneko-Zagier, unpublished]
For $\bs$ a composition, the \emph{finite multiple zeta value} is defined to be
\[
\zeta_{\cA}(\bs):=\big(\H(\bs)\mod p\big)_p\in\cA.
\]
\end{definition}
An equality between finite multiple zeta values corresponds to a congruence modulo $p$ that holds for all but finitely many $p$.

\begin{theorem}
\label{thfm}
The ring of $\cA$-valued periods of $\MT(\Z)$ is spanned as a vector space over $\Q$ by the finite MZVs.
\end{theorem}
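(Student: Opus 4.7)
The plan is to establish both inclusions $\spn_\Q\{\zeta_\cA(\bs)\} \subseteq \cP_\cA(\MT(\Z)) \subseteq \spn_\Q\{\zeta_\cA(\bs)\}$ by reducing the problem to a statement about $\Pd_0 = \fil^0\Pd/\fil^1\Pd$ and then exploiting Jarossay's formula.

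First I would describe $\cP_\cA(\MT(\Z))$ concretely. Since the Hodge filtration on $\Pd$ is induced by the weight grading (with $\Ld$ of weight $1$ and $\Zd(\bs)$ of weight $|\bs|$), Proposition \ref{pdr} implies that $\Pd_0$ is spanned as a $\Q$-vector space by the weight-zero elements $\widetilde{\alpha}(\bs) := (\Ld)^{-|\bs|}\Zd(\bs)$, as $\bs$ varies over all compositions. Applying $\per_\cA$ gives $\cP_\cA(\MT(\Z)) = \spn_\Q\{\alpha(\bs)\}$, where $\alpha(\bs) := \p^{-|\bs|}\zeta_p(\bs) \bmod p \in \cA$.

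For the inclusion $\spn_\Q\{\zeta_\cA(\bs)\} \subseteq \cP_\cA$, I would reduce Jarossay's formula \eqref{eqjar} modulo $p^{|\bs|+1}$. By the divisibility \eqref{eqzpc}, any term on the right-hand side with some $\ell_j \geq 1$ is divisible by $p^{|\bs|+1}$ and so vanishes modulo $p^{|\bs|+1}$. After dividing by $p^{|\bs|}$ and reducing modulo $p$, I obtain the clean identity
\[
\zeta_\cA(\bs) = \sum_{i=0}^{k}(-1)^{s_1+\cdots+s_i}\,\alpha(\bs_{\leq i}^{\mathrm{rev}})\,\alpha(\bs_{> i}),
\]
which displays $\zeta_\cA(\bs)$ as a polynomial in the $\alpha$'s and hence as an element of $\cP_\cA$.

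For the reverse inclusion, I would prove the motivic lift: the elements $\widetilde{\zeta_\cA}(\bs) := \sum_i (-1)^{s_1+\cdots+s_i} \widetilde{\alpha}(\bs_{\leq i}^{\mathrm{rev}})\widetilde{\alpha}(\bs_{>i}) \in \Pd_0$ span $\Pd_0$ over $\Q$; applying $\per_\cA$ will then give $\cP_\cA \subseteq \spn_\Q\{\zeta_\cA(\bs)\}$. Fix the weight $n$ and induct on the depth $k$ of $\bs$. The inductive hypothesis, combined with the fact that $\spn_\Q\{\widetilde{\zeta_\cA}(\bt)\}$ is closed under multiplication (the motivic lift of the elementary stuffle identity for $H_{p-1}$), places every product $\widetilde{\alpha}(\bt)\widetilde{\alpha}(\bt')$ with $\mathrm{depth}(\bt),\mathrm{depth}(\bt')<k$ in $\spn_\Q\{\widetilde{\zeta_\cA}\}$, so the defining formula of $\widetilde{\zeta_\cA}(\bs)$ already gives
\[
\widetilde{\alpha}(\bs) + (-1)^{|\bs|}\widetilde{\alpha}(\bs^{\mathrm{rev}}) \in \spn_\Q\{\widetilde{\zeta_\cA}(\bt)\} \pmod{\text{lower depth}}.
\]
To decouple $\widetilde{\alpha}(\bs)$ from $\widetilde{\alpha}(\bs^{\mathrm{rev}})$, I would use the shuffle relations $\widetilde{\alpha}(\bt)\widetilde{\alpha}(\bt') = \sum c^{\mathrm{sh}}_{\bu}\widetilde{\alpha}(\bu)$, which hold in $\Pd_0$ because Deligne's $p$-adic MZVs come from iterated integrals, together with the vanishings $\widetilde{\alpha}(1)=\widetilde{\alpha}(2)=0$ coming from the motivic identities $\Zd(1) = \Zd(2) = 0$ in $\MT(\Z)$. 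These furnish a system of $\Q$-linear relations among depth-$k$, weight-$n$ elements $\widetilde{\alpha}(\bs)$ that, together with the Jarossay symmetrization, is rich enough to express each $\widetilde{\alpha}(\bs)$ in terms of $\widetilde{\zeta_\cA}(\bt)$'s (for instance, in weight $3$ this recovers $\widetilde{\alpha}(3) = \tfrac{1}{3}\widetilde{\zeta_\cA}(2,1)$ after combining with Euler's motivic identity $\Zd(2,1)=\Zd(3)$).

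The main obstacle is the final step: verifying that the Jarossay symmetrization, the shuffle relations, and the motivic relations in $\Pd_0$ combine to give a system of full rank in each depth. The most natural formulation is to work on the de Rham side throughout, since once one shows $\Pd_0 = \spn_\Q\{\widetilde{\zeta_\cA}(\bs)\}$, the conclusion follows by applying the ring homomorphism $\per_\cA$.
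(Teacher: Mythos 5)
Your setup matches the paper's: both reduce to understanding $\Pd_0 = \fil^0\Pd/\fil^1\Pd$, identify it via Proposition \ref{pdr} as the span of $\widetilde\alpha(\bs) = (\Ld)^{-|\bs|}\Zd(\bs)$, and use Jarossay's formula \eqref{eqjar} reduced modulo $p^{|\bs|+1}$ to get the clean identity
\[
\zeta_\cA(\bs) = \sum_{i=0}^{k}(-1)^{s_1+\cdots+s_i}\,\alpha(s_i,\ldots,s_1)\,\alpha(s_{i+1},\ldots,s_k),
\]
which immediately gives the inclusion $\spn_\Q\{\zeta_\cA(\bs)\}\subseteq\cP_\cA(\MT(\Z))$. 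That part is fine and agrees with the paper.

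The gap is exactly where you flag it: the reverse inclusion requires knowing that the de Rham symmetrized elements $\bar{\zeta}^{\dr,s}(\bs)$ \emph{span} $\Pd_0$, not merely lie in it. The Jarossay symmetrization only produces, modulo lower depth, the combination $\widetilde\alpha(\bs) + (-1)^{|\bs|}\widetilde\alpha(\bs^{\mathrm{rev}})$, and your proposed decoupling via shuffle relations plus $\widetilde\alpha(1)=\widetilde\alpha(2)=0$ is not a proof --- it is an unverified rank claim, as you acknowledge. This is not a minor bookkeeping step; proving that the symmetrized (motivic) multiple zeta values span $\cH/\zm(2)$ is a genuine theorem. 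The paper does not attempt to re-derive it: it cites Yasuda \cite{Yas16} for precisely the statement that $\Pd_0$ is spanned by the $\bar{\zeta}^{\dr,s}(\bs)$, and then concludes via Jarossay that $\per_\cA$ carries this spanning set onto $\{\zeta_\cA(\bs)\}$. You should replace your inductive sketch with an invocation of that result; attempting to reprove it from the shuffle/stuffle system, without a dimension count, leaves the proposal incomplete.
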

\begin{proof}
The degree $0$ graded piece of $\Pd$ is the $\Q$-span of the elements
\begin{equation}
\label{eqnm}
(\Ld)^{-|\bs|}\Zd(\bs),
\end{equation}
as $\bs$ ranges through the compositions. The formula \eqref{eqjar} implies that the image of \eqref{eqnm} under the map $\per_\A$ is in the $\Q$-span of the finite MZVs. A result of Yasuda \cite{Yas16} implies that conversely, we can solve for the finite MZVs as a $\Q$-linear combination of the images of \eqref{eqnm} under $\per_\A$. 
\end{proof}

Kaneko-Zagier conjectured that relations between the finite multiple zeta values are governed by relations among the real multiple zeta values. Specifically, the conjecture says that the $\Q$-linear relations satisfied by the finite multiple zeta values are precisely the same as the $\Q$-linear relations satisfied by the \emph{symmetrized multiple zeta values}
\[
\zeta^{s}(s_1,\ldots,s_k):=\sum_{i=0}^k (-1)^{s_1+\ldots+s_k}\zeta(s_i,\ldots,s_1)\zeta(s_{i+1},\ldots,s_k)
\]
modulo $\zeta(2)$. We show that Kaneko-Zagier's conjecture is essentially equivalent to Conjecture \ref{conups} for $\MT(\Z)$.

\begin{theorem}
\label{thfmzv}
The finite multiple zeta values satisfy every relation satisfied by the motivic version of the symmetrized multiple zeta values modulo $\zm(2)$. Conjecture \ref{conups} for $\MT(\Z)$ is equivalent to the assertion that the finite MZVs satisfy precisely the same relations as the motivic symmetrized MZVs modulo $\zm(2)$.
\end{theorem}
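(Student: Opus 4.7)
The plan is to identify $\zeta_\cA(\bs)$ explicitly as $\per_\cA$ applied to a specific weight-zero element of $\Pd_0$, then use the weight grading on $\Pd$ and the known kernel of the motivic-to-de Rham quotient $\psi:\cH\to\Pd$ to translate relations between the two sides.

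I would first reduce Jarossay's identity (Theorem \ref{pj}) modulo $p^{|\bs|+1}$. The estimate \eqref{eqzpc} makes each summand on the right lie in $p^{|\bs|+\sum_j\ell_j}\Z_p$, so only the terms with every $\ell_j=0$ survive, giving
\[
p^{|\bs|}\H(\bs)\equiv\sum_{i=0}^k(-1)^{s_1+\cdots+s_i}\zeta_p(s_i,\ldots,s_1)\zeta_p(s_{i+1},\ldots,s_k)\pmod{p^{|\bs|+1}}.
\]
Dividing by $p^{|\bs|}$ and passing to $\cA$ identifies $\zeta_\cA(\bs)=\per_\cA\bigl((\Ld)^{-|\bs|}\zeta^{s,\dr}(\bs)\bigr)$, where $\zeta^{s,\dr}(\bs)\in\Pd$ is the de Rham analogue of the symmetrized MZV (same formula with $\Zd$ in place of $\zeta_p$). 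Simultaneously, the natural ring map $\psi:\cH\to\Pd$, $\zm(\bs)\mapsto\Zd(\bs)$, has kernel $\zm(2)\cH$ (the content of the statement in Section \ref{ssdr} that de Rham MZVs satisfy the motivic relations together with $\Zd(2)=0$); since $\psi$ sends $\zeta^{s,\fm}(\bs)$ to $\zeta^{s,\dr}(\bs)$, the condition $\sum c_\bs\zeta^{s,\fm}(\bs)\in\zm(2)\cH$ is equivalent to $\sum c_\bs\zeta^{s,\dr}(\bs)=0$ in $\Pd$.

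The first assertion then follows by decomposing a relation $\sum c_\bs\zeta^{s,\dr}(\bs)=0$ by weight (each $\zeta^{s,\dr}(\bs)$ being homogeneous of weight $|\bs|$), dividing the weight-$n$ piece by $\Ld^n$ to land in $\Pd_0$, summing, and applying $\per_\cA$. For the equivalence with Conjecture \ref{conups}, Theorem \ref{thfups} (applicable since $\MT(\Z)$ admits Tate twists) recasts Conjecture \ref{conups} for $\MT(\Z)$ as injectivity of $\per_\cA:\Pd_0\to\cA$. In the forward direction, a relation $\sum c_\bs\zeta_\cA(\bs)=0$ forces $\sum_\bs c_\bs(\Ld)^{-|\bs|}\zeta^{s,\dr}(\bs)=0$ in $\Pd_0$; multiplying by $\Ld^N$ with $N=\max|\bs|$ and expanding in a weight-homogeneous $\Q$-basis of $\Pd$, the $\Ld$-exponent $N-|\bs|$ separates terms with distinct $|\bs|$, so the relation decomposes into weight-homogeneous pieces and recombines to $\sum c_\bs\zeta^{s,\dr}(\bs)=0$ in $\Pd$, which by the previous paragraph is the motivic Kaneko-Zagier relation.

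For the converse, assuming the motivic Kaneko-Zagier statement, $\per_\cA$ is injective on the subspace $V\subset\Pd_0$ spanned by $\{(\Ld)^{-|\bs|}\zeta^{s,\dr}(\bs)\}$. Upgrading this to injectivity on all of $\Pd_0$ requires showing $V=\Pd_0$, which I would deduce from Yasuda's theorem \cite{Yas16} in the form that provides an explicit $\Q$-linear expansion of each generator $(\Ld)^{-|\bt|}\Zd(\bt)$ of $\Pd_0$ in terms of the symmetrized elements $(\Ld)^{-|\bs|}\zeta^{s,\dr}(\bs)$. The main obstacle is this converse step: motivic Kaneko-Zagier directly constrains only relations on the symmetrized subspace $V$, while Conjecture \ref{conups} demands injectivity on all of $\Pd_0$, so closing the gap requires the combinatorial inversion provided by Yasuda.
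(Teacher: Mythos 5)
Your argument is correct and follows essentially the same route as the paper's proof: reduce Jarossay's identity modulo $p^{|\bs|+1}$ (using the divisibility \eqref{eqzpc}) to identify $\zeta_\cA(\bs)=\per_\cA\bigl((\Ld)^{-|\bs|}\zeta^{s,\dr}(\bs)\bigr)$, invoke Yasuda to show these symmetrized elements span all of $\Pd_0$, and apply Theorem \ref{thfups} to convert Conjecture \ref{conups} into injectivity of $\per_\cA$ on $\Pd_0$. The only difference is that you make explicit the steps the paper compresses — the truncation of Jarossay's series, the separation of weight-homogeneous pieces by the $\Ld$-exponent, and the precise role of Yasuda in upgrading injectivity on the symmetrized span $V$ to injectivity on all of $\Pd_0$ — so your write-up is a correct expansion rather than a genuinely different proof.
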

\begin{proof}
Yasuda's result \cite{Yas16} implies that $\Pd_0$ is the $\Q$-span of the the elements
\begin{equation}
\label{eqzds}
\bar{\zeta}^{\dr,s}(s_1,\ldots,s_k):=(\Ld)^{-|\bs|}\sum_{i=0}^k (-1)^{s_1+\ldots+s_k}\Zd(s_i,\ldots,s_1)\Zd(s_{i+1},\ldots,s_k).
\end{equation}
The elements \eqref{eqzds} satisfy precisely the same relations as the motivic symmetrized multiple zeta values modulo $\zm(2)$. Jarossay's formula \eqref{eqjar} implies that $\per_\cA$ takes $\bar{\zeta}^{\dr,s}(\bs)$ to $\zeta_\cA(\bs)$. This proves that the finite multiple zeta values satisfy every relations satisfied by the motivic symmetrized MZVs modulo $\zm(2)$. The converse is the statement that $\per_\cA:\Pd_0\to\cA$ is injective, which by Theorem \ref{thfups} is equivalent to Conjecture \ref{conups} for $\MT(\Z)$.
\end{proof}

We can now prove the modulo $p$ independence statement for Bernoulli numbers, which was stated in Sec.\ \ref{sswo}.
\begin{proof}[Proof of Theorem \ref{thgen}]
For $n\geq 3$ odd, the elements
\begin{equation}
\label{eqeq}
n(\Ld)^{-n}\Zd(n)\in\Pd_0
\end{equation}
are algebraically independent. If we assume Conjecture \ref{conups} for $\MT(\Z)$, then by Theorem \ref{thfups}, $\per_\A$ maps \eqref{eqeq} to algebraically independent elements of $\A$. The Theorem now follow from the well-known formula
\[
np^{-n}\zeta_p(n)\equiv B_{p-n}\mod p
\]
for $p$ sufficiently large.
\end{proof}

\subsection{$\Ai$-valued periods}
\label{mtzc}
Jarossay's formula \eqref{eqjar} expresses $\H(\bs)$ as a uniformly convergent series in terms of $p$-adic multiple zeta values, which implies that
\[
H_{\p-1}(\bs):=\big(\H(\bs)\big)\in\A
\]
is in the image of the completed \uni{} period map (it is the image of the element of $\Pdh$ obtained by replacing each $\zeta_p$ on the right hand side of \eqref{eqjar} with $\Zd$). Many other combinatorially defined quantities depending on $p$ can be expressed in terms of the $\H$, and can be shown to be $\Ai$-valued periods of $\MT(\Z)$. An explicit description of the ring of $\Ai$-valued periods of $\MT(\Z)$ was computed in \cite{Ros18}.
\begin{definition}[\cite{Ros18}, Theorem 3.3]
The \emph{MHS algebra} is the subalgebra of $\Ai$ consisting of those $\alpha\in\Ai$ for which there exist sequences $a_n\in\Q$, $b_n\in\Z$ with $b_n\to\infty$, and compositions $\bs_n$, such that
\begin{equation}
\label{eqconv}
\alpha=\sum_{n=0}^\infty a_n\, \p^{b_n}H_{\p-1}(\bs_n).
\end{equation}
\end{definition}
Observe that the condition $b_n\to\infty$ guarantees that \eqref{eqconv} converges.

\begin{theorem}[\cite{Ros18}, Theorem 3.3]
The ring of $\Ai$-valued periods of the category $\MT(\Z)$ is the MHS algebra.
\end{theorem}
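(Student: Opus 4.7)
The plan is to prove the two inclusions separately, using throughout that $\hper$ is a continuous homomorphism of filtered rings, that $\Pdh$ is $\fil^\bullet$-complete by construction, and that $\fil^\bullet$ on $\Ai$ is both separated and exhaustive (so $\Ai$ is a complete topological ring).

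For the inclusion MHS algebra $\subseteq \cP_{\Ai}(\MT(\Z))$, I would lift Jarossay's formula (Theorem~\ref{pj}) from $\Ai$ to $\Pdh$. Replacing each $\zeta_p$ on the right-hand side of \eqref{eqjar} by its de Rham counterpart $\Zd$ produces a formal series $\widetilde{H}(\bs)$ whose $(i,\ell_1,\ldots,\ell_i)$-summand lies in $\fil^{|\bs|+\ell_1+\ldots+\ell_i}\Pd$. Since the total degree goes to $\infty$ with the $\ell_j$'s, the series converges in $\Pdh$, and by construction $\hper(\widetilde{H}(\bs))=\p^{|\bs|}H_{\p-1}(\bs)$. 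Multiplying by $\Ld^{b-|\bs|}\in\Pd$ (which is invertible since $\Ld$ is) yields a preimage of $\p^b H_{\p-1}(\bs)$ lying in $\fil^{b}\Pdh$. A general MHS element $\sum_n a_n\,\p^{b_n}H_{\p-1}(\bs_n)$ with $b_n\to\infty$ then lifts to the series $\sum_n a_n\,\Ld^{b_n-|\bs_n|}\widetilde{H}(\bs_n)\in\Pdh$, which converges term-by-term in the Hodge filtration and maps under $\hper$ to the given element.

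For the inclusion $\cP_{\Ai}(\MT(\Z))\subseteq$ MHS algebra, I would use an induction on the Hodge filtration. By Proposition~\ref{pdr}, $\Pdh$ is topologically spanned over $\Q$ by the monomials $\Ld^{c}\Zd(\bs)$, so it suffices to prove that each $\p^{c}\zeta_\p(\bs)$ lies in the MHS algebra. Given such an element, which lives in $\fil^{c+|\bs|}\Ai$, I invoke Yasuda's theorem \cite{Yas16} --- used in the exact manner of the proof of Theorem~\ref{thfm} --- to rewrite the class of $\p^c\zeta_\p(\bs)$ in $\fil^{c+|\bs|}\Ai/\fil^{c+|\bs|+1}\Ai$ as a finite $\Q$-linear combination of classes of elements $\p^{c+|\bs|}H_{\p-1}(\bs')$. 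Subtracting a corresponding combination of MHS generators leaves a remainder in $\fil^{c+|\bs|+1}\Ai$; by tracking the computation through $\Pdh$, this remainder is again of the form treated by the inductive hypothesis (a finite $\Q$-linear combination of $\p^{c'}\zeta_\p(\bs')$ with $c'+|\bs'|\geq c+|\bs|+1$). Iterating produces a series in the MHS algebra whose terms lie in increasing levels of $\fil^\bullet$; the series therefore converges in $\Ai$, and its sum is $\p^c\zeta_\p(\bs)$.

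The main obstacle is the reverse inclusion. Yasuda's theorem only controls the associated graded of $\per_\A$; to run the induction I must ensure that after each subtraction, the remainder admits a canonical lift back into $\Pdh$ so that the next application of Yasuda makes sense at the new weight, and I must control this uniformly so the resulting MHS-series genuinely converges in $\Ai$. In other words, the delicate step is converting the weight-graded surjectivity provided by Yasuda into a statement about the $\fil^\bullet$-filtered rings themselves, which requires careful bookkeeping of lifts at each stage of the induction.
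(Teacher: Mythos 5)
This statement is quoted from \cite{Ros18}, Theorem 3.3, and the present paper does not supply a proof, so there is no in-paper argument to compare your write-up against; I will assess the proposal on its own terms.

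Your forward inclusion (MHS algebra $\subseteq\cP_{\Ai}(\MT(\Z))$) is essentially correct, and matches the remark the paper makes below \eqref{eqwash} and in Sec.~\ref{mtzc}: the termwise lift $\widetilde{H}(\bs)\in\fil^{|\bs|}\Pdh$ of Jarossay's series converges because the summands sit in $\fil^{|\bs|+\ell_1+\cdots+\ell_i}$, multiplying by $(\Ld)^{b-|\bs|}$ produces a preimage of $\p^{b}H_{\p-1}(\bs)$ in $\fil^{b}\Pdh$, and $b_n\to\infty$ then guarantees convergence of the total series in $\Pdh$.

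The gap you flag in the reverse inclusion is real and is not closed as written. After subtracting an MHS-correction at level $n$, the remainder is merely an element of $\fil^{n+1}\Ai$; the associated-graded surjectivity gives you no canonical way to lift that remainder back to $\Pdh$ (or to re-express it as a filtration-respecting combination of $\p^{c'}\zeta_\p(\bs')$), which is exactly what you would need to reapply Yasuda at weight $n+1$ and to control the exponents $b_n$ in the resulting MHS-series. The clean repair is to run the successive-approximation argument entirely upstairs in $\Pdh$, where it is unproblematic: $\Pdh$ is $\fil$-complete by construction, the leading terms of $\widetilde{H}(\bs)$ are the symmetrized de Rham MZVs $\bar\zeta^{\dr,s}(\bs)$, and Yasuda's theorem (as invoked in Theorems~\ref{thfm} and~\ref{thfmzv}, extended to all weights by multiplying by powers of $\Ld$) shows these span the associated graded of $\Pd$. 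A standard completeness argument then shows the $\Ld^{\,b}\widetilde{H}(\bs)$ topologically generate $\Pdh$. Applying $\hper$ to the resulting expansion of any element of $\Pdh$ hands you a valid MHS-series directly, with $b_n\to\infty$ built in, and you never have to lift remainders out of $\Ai$. Two smaller points: Yasuda's statement is about $\Pd_0$ only, so the twisting by $\Ld$ to cover all weights is a step you need to make explicit; and the rewriting of $\p^c\zeta_\p(\bs)$ modulo $\fil^{c+|\bs|+1}$ in terms of $H_{\p-1}$'s is obtained by combining Yasuda's spanning statement with the leading term of Jarossay's formula \eqref{eqjar}, not by Yasuda alone.
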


As a consequence, elements of the MHS algebra can be lifted to $\Pdh$, and if we assume the truth of Conjecture \ref{conups} for $\MT(\Z)$, we get a Galois theory for the MHS algebra. Some aspects of the Galois action are computed in \cite{Ros18}.

Many combinatorially-defined sequences can be shown to be in the MHS algebra (several examples are given in \cite{Ros18}, \S7). For example, if $f(x)$, $g(x)\in\Z[x]$ have positive leading coefficient, then the sequence of binomial coefficients
\[
{f(\p)\choose g(\p)}:=\Bigg({f(p)\choose g(p)}\Bigg)_p \in\A
\]
is in the MHS algebra (\cite{Ros16a}, Theorem 7.11). In \cite{Ros18}, \S4, there is an algorithm (along with a link to an implementation) that takes as input a positive integer $n$ and two elements $(a_p)$, $(b_p)$ of the MHS algebra, and gives as ouput either a proof that $a_p\equiv b_p$ mod $p^n$ for $p$ large, or a proof that Conjecture \ref{conups} implies there are infinitely many $p$ for which $a_p\not\cong b_p$ mod $p^n$.


\bibliographystyle{alpha}
\bibliography{jrbiblio}
\end{document}